\NeedsTeXFormat{LaTeX2e}
\documentclass[reqno,10pt]{amsart}  
\usepackage[english]{babel}
\usepackage{amsthm}
\usepackage{amsmath}
\usepackage{amssymb}
\usepackage{mathrsfs}
\usepackage[latin1]{inputenc}
\usepackage[normalem]{ulem}
\usepackage{graphicx}

\usepackage[all]{xy}
\usepackage{dsfont}
\usepackage[bookmarksnumbered,colorlinks]{hyperref}
\hypersetup{colorlinks=true, linkcolor=blue, citecolor=blue}
\usepackage{epsf}
\usepackage{enumerate}
\def\bb#1\eb{\textcolor{blue}
{#1}} %
\hyphenation{Lo-ren-tzian}

\newcommand{\R}{\mathds R}

\newcommand{\be}{\begin{equation}}
\newcommand{\ee}{\end{equation}}

     \def\bb#1\eb{\textcolor{blue}{#1}} %
      
      
\newcommand{\RR}{\mathds{R}}
\newcommand{\vep}{\varepsilon}
\newcommand{\lra}{\longrightarrow}
\newcommand\ip[3]{g_{#3}({#1},{#2})} 
\newcommand{\ds}{\dot{\sigma}}
\newcommand{\xx}{{\boldsymbol {\it x}}} 
\newcommand\ff[2]{S_{\ds(0)}^P({#1},{#2})} 
\newcommand{\beqa}{\begin{eqnarray}}
\newcommand{\beq}{\begin{equation}}
\newcommand{\eeqa}{\end{eqnarray}}
\newcommand{\eeq}{\end{equation}}
\newcommand\chrf[1]{I^+(#1)} 
\newcommand\chrp[1]{I^-(#1)} 
\newcommand\cf[1]{J^+(#1)} 
\newcommand\cp[1]{J^-(#1)} 
\newcommand\fh[1]{E^+(#1)} 

\newcommand\uu{\mathcal{U}} 


\title[Penrose's singularity theorem in a Finsler spacetime]{Penrose's singularity theorem in a Finsler spacetime}

\author[A. B. Aazami]{Amir Babak Aazami}
\address{Kavli IPMU (WPI), UTIAS\hfill\break\indent
The University of Tokyo\hfill\break\indent
Kashiwa, Chiba 277-8583, Japan}
\email{amir.aazami@ipmu.jp}

\author[M. A. Javaloyes]{Miguel Angel Javaloyes}
\address{Departamento de Matem\'aticas \hfill\break\indent
Universidad de Murcia \hfill\break\indent
Campus de Espinardo\hfill\break\indent
30100 Espinardo, Murcia, Spain}
\email{majava@um.es}


\thanks{2010 {\em Mathematics Subject Classification:} 53C22, 53C50, 53C60, 58B20.\\
\indent \emph{Key words:} Finsler spacetimes, spacetime singularity theorems.\\
\indent
This activity is supported by the programme ``Young leaders in research'' 18942/JLI/13  by Fundaci\'on S\'eneca, Regional Agency for Science and Technology from the Region of Murcia, and by the World Premier International Research Center Initiative (WPI), MEXT, Japan. 
 }

\begin{document}

\newtheorem{thm}{Theorem}[section]
\newtheorem{prop}[thm]{Proposition}
\newtheorem{lemma}[thm]{Lemma}
\newtheorem{cor}[thm]{Corollary}
\theoremstyle{definition}
\newtheorem{defi}[thm]{Definition}
\newtheorem{notation}[thm]{Notation}
\newtheorem{exe}[thm]{Example}
\newtheorem{conj}[thm]{Conjecture}
\newtheorem{prob}[thm]{Problem}
\newtheorem{rem}[thm]{Remark}
\newtheorem{conv}[thm]{Convention}
\newtheorem{crit}[thm]{Criterion}
\newtheorem{propdef}[thm]{Proposition-definition}
\newtheorem{lemmadef}[thm]{Lemma-definition}

\begin{abstract}
We translate Penrose's singularity theorem to a Finsler spacetime.  To that end, causal concepts in Lorentzian geometry are extended, including definitions and properties of focal points and trapped surfaces, with careful attention paid to the differences that arise in the Finslerian setting.
\end{abstract}

\maketitle

\section{Introduction}

The purpose of this paper is to show that the famous ``singularity theorem" of R. Penrose \cite{pen65} translates straightforwardly to the setting of a {\it Finsler spacetime}.  In the singularity theorem of Penrose (and the subsequent timelike version of S. Hawking \cite{hawking67} soon thereafter), the existence of a physical spacetime singularity is equated mathematically with the existence of an incomplete causal geodesic.  Viewed in this light, the singularity theorems are remarkable in that they provide fairly generic and purely geometric conditions under which a Lorentzian manifold fails to be geodesically complete.  Given the central role that geometry plays here, it is therefore worthwhile to examine these theorems in geometries more general than Lorentzian. 

In particular, what about the case of a \emph{Finsler spacetime,} where one has only a norm on (a subset of) the tangent bundle, whose Hessian has Lorentz signature?  Finsler spacetimes have rich geometries, characterized by their convex cones; whereas in Lorentzian spacetimes timecones are byproducts of a global metric, in a Finsler spacetime they take center stage.  Now suppose that one wishes to determine whether incomplete geodesics exist in such a setting. Given the (Lorentz) signature of the Hessian mentioned above, a natural plan of attack is to write down the (geometric) conditions required in Penrose's Lorentzian proof, and to see if they can still be defined in the Finslerian setting.  After doing this, one then proceeds to see whether the mechanism of Penrose's proof still goes through.  This, in essence, is what we are doing in this paper.  Our motivation is twofold: (1) to study geodesics in Finsler manifolds whose Hessians have Lorentz signature, and (2) to understand how ``rigid" Penrose's singularity theorem is with respect to its assumptions, in the following sense: if certain geometric quantities on the manifold\,---\,such as the metric, for example\,---\,are weakened, will geodesic incompleteness still hold as Penrose prescribed?  Indeed, since Finsler geometry has made some headway into both general relativity and high energy physics (see, e.g., \cite{Perlick06}, \cite{girelli07}, \cite{kostelecky11}, \cite{pfeifer}), it is hoped that this paper may contribute to future studies in which gravity and Finsler geometry are examined together.  

Even though the steps in Penrose's proof generalize quite well to the Finslerian setting, nevertheless there are some difficulties appearing in this process that we mention here. The first difficulty is that computations of the variations of energy are more involved, but this can be overcome via the approach taken in \cite{Ja13,Ja14,JaSo14}.  The key point in this approach is that it provides a relation between the Chern curvature tensor and the curvature of an affine connection (see \eqref{CurvatureRel}), and also between the curvature of a covariant derivative along a two-parameter map and the Chern curvature tensor (see \eqref{varCurvRel}). These relations yield the first and second variations in an index-free manner different from the classical one \cite[Chapters 5,7]{bao2000}, and allow one to recover the results of Morse index theory with less effort. As a matter of fact,  the covariant derivative along curves induced by the affine connection is the same as the one considered in the classical reference \cite{bao2000}; moreover, all the formulas in the classical Lorentzian case hold, with the addition only of some terms depending on the Cartan and Chern tensors (see \eqref{cartantensor} and \eqref{Cherntensor}), which in fact turn out to be zero in many relevant cases. Observe that even though we have used the Chern connection to define geodesics, the Ricci curvature, and trapped surfaces, none of these depends on the particular choice of this connection; all of them can be recovered using other connections, given enough compatibility with geodesic variations (see Remark~\ref{connections}).  A second difficulty that arises is that the Finslerian exponential map cannot be used to obtain a variation for arbitrarily chosen variation and acceleration fields (see Lemma \ref{AVvar}), which is required to prove Proposition \ref{prop:causal2}.  This is overcome by using a Lorentzian metric associated to a geodesic vector field, whose Levi-Civita connection can be identified with the Chern connection of the Finsler spacetime.  A third difficulty is the lack of the differentiability of the metric in timelike directions. This hypothesis is important because there is a well known family of Finsler spacetimes, sometimes referred to as static spacetimes, that are not smooth in certain timelike directions.  As most of the hypotheses of Penrose's theorem depend only on lightlike vectors, the lack of differentiability with respect to timelike vectors makes it a little harder to prove Proposition \ref{prop:causal1}.   A fourth difficulty is that one must also take care when dealing with the notion of orthogonality to submanifolds, as for example in Propositions \ref{codim2} and \ref{prop:ftrapped}; in general, however, strong convexity and transversality are enough to generalize Lorentzian results. Nonetheless, to give a purview of the difficulties that arise, observe that the fact that there are exactly two future-directed lightlike vectors orthogonal to a spacelike codimension 2 submanifold is not immediate in the Finslerian setting and requires some work to prove (see Proposition \ref{codim2}).  Finally, in this paper we do not attempt to show the equivalence of (the Finsler spacetime analogues of) global hyperbolicity and a Cauchy hypersurface.  Recall that global hyperbolicity guarantees that the causal sets $J^{\pm}(K)$ are closed for any compact $K \subset M$, which fact enables one to prove that future horizons of trapped surfaces are closed topological hypersurfaces\,---\,a key step in Penrose's proof.  In the Finsler spacetime setting, therefore, we have assumed that our Finsler spacetimes are globally hyperbolic, \emph{in addition} to having Cauchy hypersurfaces (actually, it suffices only to assume that the diamonds $\cf{p} \cap \cp{q}$ are closed for all $p,q \in M$).  Of course, if an equivalence exists between global hyperbolicity and a Cauchy hypersurface for Finsler spacetimes, then this additional assumption will be redundant.

This paper is organized as follows.  In Sections \ref{section:Finsler} and \ref{section:causality} the definition of a Finsler spacetime (following \cite{JaSa13}) is reviewed and basic causal concepts, such as timelike, spacelike, and lightlike vectors, are defined.  In Section \ref{section:curvature}  the relevant Finslerian geometric quantities\,---\,the Cartan tensor, Chern connection, and curvature\,---\,are reviewed, following \cite{Ja13,JaSo14}.  Then, after a discussion of spacelike submanifolds and Jacobi fields in Finsler spacetimes in Section \ref{section:jacobi}, we proceed in Section \ref{section:focal} with a detailed account of focal points along lightlike geodesics in Finsler spacetimes, in close analogy to the Lorentzian treatment in \cite[Chapter~10]{o1983}.  Finally, a (Finsler spacetime) version of Penrose's singularity theorem is worked out in Section \ref{section:Penrose}, in close analogy to the Lorentzian proof as presented in \cite[Chapter~14]{o1983}.

\section{Preliminaries on Finsler spacetimes}
\label{section:Finsler}
We consider here a generalization of the notion of Finsler spacetime, in the sense that a high degree of non-smoothness is allowed in timelike directions.
\begin{defi}
\label{def:Finsler}
Let $M$ be a smooth connected manifold  of dimension $n$  and $\pi\colon TM\lra M$ the natural projection from its tangent bundle $TM$.  Given an open subset $A \subset TM\setminus \bf{0}$ satisfying $\pi(A) = M$, a continuous function $L\colon A\lra (0,+\infty)$ determines a \emph{Finsler spacetime} $(M,L)$ if it satisfies   the following properties:
\begin{itemize}
\item[(i)] each $A_p:=A\cap T_pM$ is both convex (if $v, w \in A_p$ and $\lambda \in [0,1]$, then $\lambda v +(1-\lambda) w \in A_p$) and conic (if $v\in A_p$ and $\lambda>0$, then $\lambda v\in A_p$), 
\item[(ii)] $L$ is positive homogeneous of degree $2$ (if $v\in A$ and $\lambda>0$, then $L(\lambda v)=\lambda^2 L(v)$),
\item[(iii)] $A$ has smooth boundary in $TM\setminus \bf 0$, 
\item[(iv)]  $L$ extends to the  boundary  $\partial{A}$  of $A$ as zero, and this extension is smooth at
 $\bar{A}\setminus\{A\cup \bf 0\}$, in the sense that there is  a neighborhood  of the boundary where $L$ is smooth. Moreover, $\bar{A}$ is \emph{salient;} i.e., it does not contain lines,
\item[(v)]  the
\emph{fundamental tensor} $g$, defined by 
\begin{equation}\label{fundten}
g_v(u,w)\ :=\ \frac{1}{2}\!\left. \frac{\partial^2}{\partial s\partial t}L(v+tu+sw)\right|_{t=s=0}
\end{equation}
for every $v\in \hat{A} :=\bar{A}\setminus \bf 0$ at which $L$ is smooth, and for all $u,w\in T_{\pi(v)}M$,  is required to be nondegenerate on the boundary of $\hat{A}$.
\item[(vi)] the subset $\{v\in A_p: L(v)\geq 1\}$ is convex for every $p\in M$.
\end{itemize}
\end{defi}
 To avoid problems with differentiability, we will assume in fact that the function $L$ can be extended to a conic open subset $A^* \subset TM \setminus {\bf 0}$ which contains $\hat{A}$ in such a way that $L$ preserves properties $\rm (ii)$ and  $\rm (v)$  above in an open subset that contains the boundary.  In what follows, we will use the notation $\hat{A}_p=\hat{A}\cap T_pM$ for every $p\in M$.  Observe that this definition is more general than the ones considered in \cite{Beem70,Perlick06,JaSa13} and that it includes the examples in \cite{CaSt15,LaPeHa12}.  On the one hand, we consider fundamental tensors defined only with respect to causal vectors, as in \cite{JaSa13}, because causality and geodesics do not depend on the extension, which, when it exists, is not unique.  On the other hand, allowing for a lack of differentiability in timelike directions is important because it allows us to consider examples such as static Finsler spacetimes $(M\times \R,L)$, where the time-independent metric $L$ is defined as
$L(v,\tau) =  -F(v)^2+\Lambda \tau^2$,
where $(v,\tau)\in TM\times\R$ and $\Lambda$ and $F$ are, respectively, a positive function and a Finsler metric on $M$. Observe that because $F^2$ is not smooth in the zero section (see \cite[Proposition 4.1]{Warner}), $L$ is not smooth in the vectors $(0,\tau)$.  In general, the possible lack of smoothness in our definition along timelike directions prohibits the use of convex neighborhoods, even when the metric is extendable to the whole tangent bundle, as in \cite{Minguzopoli2}. Of course, when the metric $L$ is smooth and  its fundamental tensor is nondegenerate, our definition is equivalent to that considered in \cite{JaSa13} and also, using \cite{Minguzopoli2}, to that in \cite{Beem70}.  As a final remark, observe that there are some cases in which the metric is not smooth in some directions but it is still possible to define timelike geodesics (see \cite{LaPeHa12}).

Moving on, we define the {\it indicatrix} of $L$ at $p\in M$ to be the set $\Sigma_p=\{v\in A_p: L(v)=1\}$ and the {\it lightcone} ${\mathcal C}_p$ at $p$ as the boundary of $A_p$ minus the zero vector, namely, ${\mathcal C}_p=\{v\in \hat{A}_p:L(v)=0\}$.  Recall that a hypersurface $S$ is {\em strongly convex} (resp. {\em convex}) when its second fundamental form with respect to the opposite of the position vector is definite (resp. semi-definite) with respect to any transverse vector (here we compute the second fundamental form by considering $T_pM$ as an affine space endowed with the  connection associated with the affine structure); it is {\em strictly convex} when the tangent space at any point $v_0$ touches $S$ only at that point (see, e.g., \cite[Theorem 2.14]{JaSa11}).  Moreover, it is well known that strong convexity implies strict convexity, but not the converse, and that the smooth boundary of a (strictly) convex body is (strictly) convex. Therefore, Definition \ref{def:Finsler}$(vi)$ implies that $\Sigma_p$, when smooth, is convex.
\begin{prop}
\label{prop:ind}
Let $(M,L)$ be a Finsler spacetime and $p\in M$. Then   ${\mathcal C}_p\setminus 0$  is a smooth hypersurface of $T_pM$ and the following are true:
\begin{enumerate}[(i)]
\item The indicatrix $\Sigma_p$ is strongly convex, $g_v$ has index $n-1$ at the points $v\in \Sigma_p$ where $L$ is smooth and $g_v$ is non-degenerate, 
\item The tangent space to the lightcone ${\mathcal C}_p$ at $v\in {\mathcal C}_p\setminus 0$ is given by 
$\{w\in T_pM: g_v(v,w)=0\}$. Moreover,  $g_v(v,v)=L(v)=0$,
\item  The fundamental tensor $g$ is negative semi-definite when restricted to  the tangent space to ${\mathcal C}_p\setminus 0$   and for every $v\in {\mathcal C}_p\setminus 0$, $g_v$ has index $n-1$, 
\item There exists an affine hyperplane $W \subset T_pM$ that does not contain any direction of $\hat{A}_p$ and the intersection of $W$ with ${\mathcal C}_p$ is a compact strongly convex hypersurface in $W$  homeomorphic to a sphere; furthermore, $W \cap \mathcal{C}_p$ is also strictly convex,
\item ${\mathcal C}_p$ is convex and its intersection with its tangent hyperplane at $v\in {\mathcal C}_p$ is equal  to the ray from the origin that goes through $v$.
\end{enumerate}
\end{prop}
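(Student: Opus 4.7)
The strategy is to exploit the positive homogeneity of $L$ together with the convexity assumptions on $A_p$ and on $\{L\geq 1\}$, and the nondegeneracy of $g$ on $\partial\hat A$. I first verify that $\mathcal{C}_p\setminus 0$ is a smooth hypersurface: by Definition~\ref{def:Finsler}(iii)--(iv), $L$ is smooth on a neighborhood of $\mathcal{C}_p\setminus 0$ in $T_pM$, and differentiating $L(\lambda v)=\lambda^2 L(v)$ twice yields the Euler identities $dL_v(w)=2g_v(v,w)$ and $g_v(v,v)=L(v)$ wherever $g_v$ is defined. Nondegeneracy of $g_v$ on $\partial\hat A$ (Definition~\ref{def:Finsler}(v)) makes $dL_v$ nonzero on $\mathcal{C}_p\setminus 0$, so the implicit function theorem gives smoothness together with $T_v\mathcal{C}_p=\ker dL_v=\{w:g_v(v,w)=0\}$. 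Combined with $g_v(v,v)=L(v)=0$, this already establishes part (ii).

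For (i), the same Euler identities give $g_v(v,v)=1$ on the indicatrix and identify $T_v\Sigma_p$ with the $g_v$-orthogonal complement of $v$. In the affine space $T_pM$, endowed with its flat linear connection, the second fundamental form of a smooth level set $\{f=c\}$ with respect to a transverse vector $\xi$ satisfies $\mathrm{II}_\xi(X,Y)\,df(\xi)=-\mathrm{Hess}(f)(X,Y)$; applied with $f=L$ and $\xi=-v$ on $\Sigma_p$, this yields $\mathrm{II}_{-v}(X,Y)=g_v(X,Y)$ for $X,Y\in T_v\Sigma_p$. Convexity of $\{L\geq 1\}$ (Definition~\ref{def:Finsler}(vi)) therefore forces $g_v|_{T_v\Sigma_p}$ to be negative semi-definite, and when $g_v$ is in addition nondegenerate at $v$ it must be negative definite, since any null vector in $T_v\Sigma_p$ would be $g_v$-orthogonal to all of $T_v\Sigma_p$ and to $v$, contradicting nondegeneracy. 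Together with $g_v(v,v)=1>0$ this yields strong convexity of $\Sigma_p$ and index $n-1$ for $g_v$.

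The global signature in (iii) is then obtained by continuity: approach $v\in\mathcal{C}_p$ by vectors in the smooth locus of $A_p$, where $g$ has index $n-1$ by (i), and invoke nondegeneracy on $\partial\hat A$ to rule out any jump of signature in the limit. Once $g_v$ is Lorentzian and $v$ is null, standard linear algebra shows that the $g_v$-orthogonal complement $T_v\mathcal{C}_p$ has radical exactly $\mathrm{span}(v)$ with $g_v$ negative definite on any transverse complement, giving the semi-definite form of signature $(0,n-2,1)$ asserted in (iii). For (iv), $\bar A_p$ is a closed salient convex cone by Definition~\ref{def:Finsler}(i) and (iv), so Hahn--Banach furnishes a linear functional $\phi$ strictly positive on $\bar A_p\setminus 0$; setting $W=\{\phi=1\}$ produces a compact convex cross-section $W\cap\bar A_p$ whose boundary $W\cap\mathcal{C}_p$ is a topological $(n-2)$-sphere. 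Since $v\notin T_vW$ by construction, $T_v(W\cap\mathcal{C}_p)=T_vW\cap T_v\mathcal{C}_p$ is a complement of $\mathrm{span}(v)$ inside $T_v\mathcal{C}_p$, and (iii) forces $g_v$ to be negative definite there; the affine second fundamental form identity, applied inside $W$ with $L|_W$ as defining function, upgrades this to strong convexity, and strict convexity then follows since $W\cap\bar A_p$ is a convex body with smooth, strongly convex boundary.

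Finally (v) is a synthesis: convexity of $\mathcal{C}_p$ as a hypersurface follows from convexity of $\bar A_p$, since each tangent hyperplane to the smooth boundary of a convex set is a supporting hyperplane. The ray through $v$ lies in both $\mathcal{C}_p$ (by homogeneity) and in the tangent hyperplane at $v$ (as $v\in T_v\mathcal{C}_p$), giving one inclusion; intersecting with the hyperplane $W$ of (iv), the strict convexity of $W\cap\mathcal{C}_p$ forces the tangent hyperplane to $\mathcal{C}_p$ at $v$ to meet $\mathcal{C}_p$ only along this radial line. The main obstacle I anticipate is the continuity-of-signature step in (iii), where one must carefully arrange the approximating sequence inside the smooth locus of $L$ and use nondegeneracy on $\partial\hat A$ to preclude a drop in signature at the boundary; a close second is the careful bookkeeping of signs for the affine second fundamental form, which must be carried out purely affinely, without any background inner product on $T_pM$.
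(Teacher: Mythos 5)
Your proof is correct and follows essentially the same route as the paper's: Euler's identities for the homogeneous $L$, the affine second-fundamental-form relation between $\mathrm{Hess}(L)=2g$ and the convexity of the level sets (via Definition~\ref{def:Finsler}(vi)), continuity of the index up to the boundary using the nondegeneracy assumed there, and a compact strictly convex affine cross-section of the cone to handle (iv) and (v). The only cosmetic difference is your Hahn--Banach construction of the transverse hyperplane $W$; the paper instead tilts a tangent plane of the cone about the origin and takes a parallel affine hyperplane on the same side as $\mathcal{C}_p$.
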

\begin{proof}
For part $(i)$, observe that at the points  $v\in \Sigma_p$ where $\Sigma_p$ is smooth,  the restriction of the fundamental tensor to the tangent spaces of $\Sigma_p$ can be expressed in terms of its second fundamental form $\sigma^\xi$ with respect to the opposite $\xi$ to the position vector, as
\begin{equation}\label{secondfun}
g(X,X)\ =\ -\frac 12 \sigma^\xi(X,X) \xi(L),
\end{equation}
 for $X$ tangent to $\Sigma_p$  (see, e.g., the relation $(2.5)$ in \cite{JaSa11}).  Recall also the well known facts \begin{equation}\label{eq:dL}
 dL_v(w)=2g_v(v,w),\quad dL_v(v)=2L(v)=2g_v(v,v).
\end{equation} 
  It follows from part $(vi)$  in Definition \ref{def:Finsler}  that $\Sigma_p$ is convex and  $\sigma^\xi$ is negative semi-definite. As \eqref{eq:dL} implies that the direction $v$ is $g_v$-orthogonal to the tangent space to $\Sigma_p$, the nondegeneracy of $g_v$ and \eqref{secondfun} imply that $g_v$ has index $n-1$ and $\Sigma_p$ is strongly convex. 
 
 Part $(ii)$ follows from \eqref{eq:dL}, which also implies that  ${\mathcal C}_p\setminus 0$ is a smooth embedded hypersurface of $T_pM$. Part $(iii)$ is a consequence of parts $(i)$ and $(ii)$: by part $(i)$, we deduce that the fundamental tensor has index $n-1$, since it has that index in points of $A$ where $L$ is non-degenerate, which can be chosen as close as we want to $\mathcal C_p$. Then part $(ii)$ implies that $g_v$ is negative semi-definite when restricted to the tangent spaces of $\mathcal C_p$ (it is the orthogonal space of a lightlike vector in the semi-Riemannian space $(T_pM,g_v)$ of index $n-1$). Finally, for $(iv)$ and $(v)$ first observe that \eqref{eq:dL} still remains true at the points of $\mathcal C_p$, choosing any $\xi$ such that $-\xi$ points to $A$. Then  $-\xi(L)>0$, because $L$ is zero on the lightcone and positive on $A$, but $-\xi(L)\not= 0$ since otherwise $g$ would not have index $n-1$. Moreover, the only degenerate direction in $T_v{\mathcal C}_p$ is the one determined by $v$, which is in fact contained in ${\mathcal C}_p$, and $\sigma^\xi$ is negative definite in the tangent to $W\cap \mathcal C_p$.  For the choice of $W$, take into account that $\bar{A}$ is convex and salient.  In particular, for $(iv)$ take a tangent plane to the cone ${\mathcal C}_p$, spin it around the origin in the opposite direction of the cone and then consider any parallel affine hypersurface to it contained in the same half-space as the cone ${\mathcal C}_p$. 
\end{proof}

\begin{rem}\label{rem:Lnegative}
Observe that part $(ii)$ of Proposition \ref{prop:ind} implies that if we extend $L$ to an open subset $A^*$, then we can assume, by shrinking $A^*$ if necessary, that $L$ is negative in $A^*\setminus \hat{A}$. This is because given $p\in M$, $dL_v(w)=2g_v(v,w)$ for every $v\in \mathcal C_p$.  By part $(ii)$ of Proposition \ref{prop:ind}, $g_v(v,w)=0$ if and only if $w$ is tangent to $\mathcal C_p$, hence we conclude easily that $g_v(v,w)>0$ if and only if $w$ points to the convex subset delimited by $\mathcal C_p$, which together with the homogeneity of $L$ and the compactness of the intersection ${\mathcal C}_p\cap W$ for a hyperplane $W$ as in part $(iv)$ of Proposition \ref{prop:ind}, allows us to conclude the existence of the open subset $A^*$ as above.
\end{rem}
 Next, we have the following analogues of the reverse triangle inequality and the reverse fundamental inequality; see also \cite{Ming14} for a proof of this result in a smooth background.  Just as the fundamental inequality in Riemann-Finsler geometry (see \cite[p.~10]{bao2000}) generalizes the Cauchy-Schwarz inequality, so the reverse fundamental inequality in Proposition~\ref{Lorentznorm} below generalizes the timelike Cauchy-Schwarz inequality in Lorentzian geometry (see \cite[Proposition 30,~p.~144]{o1983}).

\begin{prop}\label{Lorentznorm}
Consider $F=\sqrt{L}$, which is positive homogeneous of degree $1$. Then
\begin{enumerate}[(i)]
\item  $F$ satisfies the {\rm reverse triangle inequality:}
\[F(v+w)\ \geq\ F(v)\ +\ F(w)\]
at every $p\in M$ and $v,w\in \hat{A}_p$,
\item $F$ satisfies the {\rm reverse fundamental inequality:}
\[g_v(v,w)\ \geq\ F(v) F(w)\]
for all $v\in \hat{A}$ at which $L$ is smooth and $g_v$ is nondegenerate, and for all $w\in \hat{A}_{\pi(v)}$, with equality if and only if $w=\lambda v$ for some $\lambda> 0$.
\end{enumerate}
\end{prop}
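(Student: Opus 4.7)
The plan is to derive (ii) from a supporting-hyperplane argument at the indicatrix $\Sigma_p$, and then to prove (i) from the concavity of $F$ encoded in Definition~\ref{def:Finsler}(vi). I would handle $v$ lightlike and $v$ timelike separately in (ii). If $v\in\mathcal C_p$, then $F(v)=0$ and Remark~\ref{rem:Lnegative} puts $\hat A_p$ inside the half-space $\{u:g_v(v,u)\geq 0\}$, which is the required inequality; equality then forces $w\in T_v\mathcal C_p\cap\hat A_p$, and by Remark~\ref{rem:Lnegative} together with Proposition~\ref{prop:ind}(v) this intersection equals the ray $\{\lambda v:\lambda>0\}$.

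For $v$ timelike, I would first normalize $F(v)=1$ using the homogeneity $g_{\lambda v}=g_v$ for $\lambda>0$, a direct consequence of the $2$-homogeneity of $L$. By Definition~\ref{def:Finsler}(vi) the set $\{u\in A_p:L(u)\geq 1\}$ is convex with boundary $\Sigma_p$, and by \eqref{eq:dL} its affine tangent hyperplane at $v$ is $H_v:=\{u:g_v(v,u)=1\}$. Convexity then makes $H_v$ a supporting hyperplane, with the body on the side containing $2v$, so $\{L\geq 1\}\subset\{u:g_v(v,u)\geq 1\}$. Applying this with $w/F(w)\in\Sigma_p$ yields $g_v(v,w)\geq F(w)=F(v)F(w)$ whenever $F(w)>0$, and for $w\in\mathcal C_p$ the inequality follows by approximating $w$ via timelike vectors $w+\epsilon v\in A_p$ and letting $\epsilon\to 0$.

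For the equality case with $F(w)>0$, equality forces $w/F(w)\in H_v\cap\Sigma_p$, and I would argue that this intersection reduces to $\{v\}$: locally by strong convexity of $\Sigma_p$ at $v$ (Proposition~\ref{prop:ind}(i)), and globally because any segment from $v$ in the convex face $H_v\cap\{L\geq 1\}$ would lie tangent to $\Sigma_p$ at $v$, contradicting strong convexity. Hence $w=F(w)v$. For $F(w)=0$ with $w\neq 0$, I expect the inequality to be strict: Taylor-expanding the smooth $L$ around $w\in\mathcal C_p$ and using $g_w(w,v)>0$ from Remark~\ref{rem:Lnegative} gives $L(Nw+v)=2Ng_w(w,v)+g_w(v,v)+O(1/N)\to\infty$, so $Nw+v\in\{L\geq 1\}\setminus\{v\}$ for large $N$, whereupon strict convexity upgrades the supporting inequality to $g_v(v,Nw+v)>1$, giving $g_v(v,w)>0=F(v)F(w)$.

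For (i), I would argue directly from concavity of $F$. By Definition~\ref{def:Finsler}(vi) and $2$-homogeneity of $L$, each superlevel set $\{F\geq c\}\cap A_p=\{L\geq c^2\}\cap A_p$ is convex. Combined with the $1$-homogeneity of $F$, a standard convex-analysis argument---writing $(1-t)v+tw=((1-t)F(v)+tF(w))z$ for $z$ a convex combination of the normalized $v/F(v),w/F(w)\in\Sigma_p$---shows $F$ is concave on $A_p$. Specializing to $t=\tfrac12$ and invoking homogeneity yields $F(v+w)=2F((v+w)/2)\geq F(v)+F(w)$ on $A_p$, which extends to $\hat A_p$ by continuity of $F$. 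The main obstacle I anticipate is the strict inequality in the lightlike branch of equality in (ii); naive approximation only delivers $g_v(v,w)\geq 0$, and upgrading to strict requires combining the smoothness of $L$ on $\mathcal C_p$ with Remark~\ref{rem:Lnegative} to produce a distinct point $Nw+v\neq v$ of $\{L\geq 1\}$ at which the supporting inequality must be strict.
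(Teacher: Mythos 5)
Your proof is correct and follows essentially the same route as the paper's: part (i) from the convexity of $\{L\geq 1\}$ in Definition~\ref{def:Finsler}(vi), and part (ii) from the fact that the affine tangent hyperplane to $\Sigma_p$ at $v/F(v)$, which by \eqref{eq:dL} is a level set of $g_v(v,\cdot)$, supports $\{L\geq 1\}$, with strong/strict convexity giving the equality case and Remark~\ref{rem:Lnegative} handling lightlike $v$. You merely spell out details the paper compresses (e.g.\ the paper gets strictness for lightlike $w$ from $\lambda>0$ in the decomposition $w=\lambda v+u$ rather than your Taylor expansion of $L(Nw+v)$), so no substantive difference.
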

\begin{proof}
Part $(i)$ is a straighforward consequence of condition $(vi)$ in Definition \ref{def:Finsler}.
For part $(ii)$,  observe that if $v\in A$, then every $w\in \hat{A}_{\pi(v)}$ can be expressed as $w=\lambda v+u$, where $u$ is $g_v$-orthogonal to $v$.  Furthermore, considering the plane $\pi_{v,w}={\rm span}\{v,w\}$ and the fact that $\Sigma_p\cap \pi_{v,w}$ is a convex curve in $T_{\pi(v)}M$ converging to the boundary of $A$, we easily deduce that $u\notin \hat{A}_{\pi(v)}$; this implies that $\lambda>0$, because otherwise $w$ cannot belong to $\hat{A}_{\pi(v)}$.  Then
$g_v(v,w)=\lambda g_v(v,v)=\lambda F(v)^2$. Now,  by part $(i)$ of  Proposition \ref{prop:ind},  $\Sigma_p$ is strongly convex at  $v/F(v)$  and it remains on one side of its tangent plane at $v/F(v)\in\Sigma_p $, touching  the tangent hyperplane only at $v/F(v)$.  As $w/F(v)=\lambda v/F(v)+u/F(v)$, it follows that $\lambda\geq F(w)/F(v)$  and the equality occurs only when $v$ and $w$ are linearly dependent.   Recalling that $g_v(v,w)=\lambda F(v)^2$,  this concludes the reverse fundamental inequality when $v\in A$.  If $v$ belongs to the boundary of $A$, then the reverse fundamental inequality is an easy consequence of Remark \ref{rem:Lnegative}.
\end{proof}

\section{Causality}
\label{section:causality}

\begin{defi}
\label{defi1}
Let $(M,L)$ be a Finsler spacetime.  Tangent vectors to $M$ are classified as follows:
$$
v \in T_pM~~\text{is}~~\left\{\begin{array}{rcl}
{\it timelike} & \text{if} & v~\text{or}~-\!v~\text{belongs to}~A_p,\\
{\it lightlike} & \text{if} & v~\text{or}~-\!v~\text{belongs to}~\hat{A}_p\setminus A_p,\\
{\it causal} & \text{if} & v~\text{or}~-\!v~\text{belongs to}~\hat{A}_p,\\
{\it spacelike} & \text{if} & \text{it is not causal.}
\end{array}\right.
$$
Moreover, a causal vector $v \in T_pM$ is {\it future-pointing} if  $v \in \hat{A}_p$ and  {\it past-pointing} if  $ -v \in \hat{A}_p$.
\end{defi}

 If the smooth manifold $M$ is simultaneously endowed with a Finsler spacetime structure $(M,L)$ and a time-oriented Lorentzian metric $(M,g)$, then we will use the designations ``$g$-timelike" and ``$g$-causal" to refer to the causality of vectors in $(M,g)$, reserving the designations ``timelike" and ``causal" for $(M,L)$.  Next, we say that a curve is future-pointing (resp. past-pointing) timelike, lightlike, causal, or spacelike, when its tangent vector is future-pointing (resp. past-pointing) timelike, lightlike, causal, or spacelike.  By a ``future-pointing  piecewise smooth causal curve $\alpha\colon [a,b] \lra M$," we always refer to a causal curve satisfying $\dot{\alpha}(t_i^-), \dot{\alpha}(t_i^+) \in \hat{A}_{\alpha(t_i)}$ at any break $\alpha(t_i)$; in other words, piecewise smooth causal curves have tangent vectors that always stay inside $\hat{A}$, even at breaks.  Moreover, we say that two points $p,q\in M$ are \emph{chronologically related,} denoted $p\ll q$, when there exists a piecewise smooth future-pointing timelike curve from $p$ to $q$, and \emph{causally related,} denoted $p\leq q$, when there exists a piecewise smooth future-pointing causal curve from $p$ to $q$.  We then have the usual designations:
$$
\left\{\begin{array}{rccl}
\text{\emph{chronological future}}: & \chrf{p} \!\!&=&\!\! \{q\in M\,:\,p\ll q\},\\
\text{\emph{chronological past}}: & \chrp{p} \!\!&=&\!\! \{q\in M\,:\,q\ll p\},\\
\text{\emph{causal future}}: & \cf{p} \!\!&=&\!\! \{q\in M\,:\,p\leq q\},\\
\text{\emph{causal past}}: & \cp{p} \!\!&=&\!\! \{q\in M\,:\,q\leq p\}. \nonumber
\end{array}\right.
$$
As in Lorentzian spacetimes, chronological sets are always open sets (see \cite[Proposition 3.7]{JaSa11}).  Denoting by $I^{\pm}(B,\uu)$ the chronological future/past of $B$ in any open set $\uu$, it follows that the sets $I^{\pm}(B,\uu)$ are open and that $I^{\pm}(B,\uu) \subset I^{\pm}(B) \cap \uu$.  Finally, we define the notions of \emph{achronality, edge points,} and \emph{past} and \emph{future sets} exactly as in the Lorentzian setting (see \cite[p.~413-5]{o1983}).  We postpone to Section~\ref{section:Penrose} a discussion of Cauchy hypersurfaces in Finsler spacetimes.  Finally, let us see that we can always select a timelike vector field globally defined on our spacetime.
\begin{prop}
\label{lemma:Lorentz}
Let $(M,L)$ be a Finsler spacetime.  Then there exists a smooth vector field $\tau \in \mathfrak{X}(M)$ satisfying $\tau_p \in A_p$ for all $p \in M$.
\end{prop}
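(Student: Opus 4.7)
The plan is a standard partition-of-unity construction, exploiting the convexity of each cone $A_p$ guaranteed by Definition~\ref{def:Finsler}(i).

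First, I would build local future-pointing smooth vector fields. Fix an arbitrary $p \in M$ and pick any $v_p \in A_p$. Since $A$ is an open subset of $TM \setminus \mathbf{0}$ and $\pi \colon TM \to M$ is a submersion, any smooth local extension $X_p$ of $v_p$ (defined, say, in local coordinates by holding the component functions constant) takes values in $A$ on a sufficiently small open neighborhood $U_p$ of $p$. Thus $X_p(q) \in A_q$ for every $q \in U_p$.

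Next, I would glue these together. The family $\{U_p\}_{p\in M}$ is an open cover of $M$, so by paracompactness there is a locally finite refinement $\{U_i\}_{i\in I}$ and a corresponding smooth partition of unity $\{\rho_i\}_{i\in I}$ subordinate to it, with associated local vector fields $X_i$ satisfying $X_i(q) \in A_q$ for every $q \in U_i$. Define
\begin{equation*}
\tau \;:=\; \sum_{i \in I} \rho_i X_i,
\end{equation*}
where each summand $\rho_i X_i$ is understood to be extended by zero outside $U_i$; this yields a well-defined smooth vector field on $M$ by local finiteness.

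The final step is to check that $\tau_q \in A_q$ pointwise. At any $q \in M$ only finitely many $\rho_i(q)$ are nonzero, and for each index $i$ with $\rho_i(q) > 0$ one has $q \in U_i$, hence $X_i(q) \in A_q$. Since $\sum_i \rho_i(q) = 1$ with nonnegative weights, $\tau_q$ is a finite convex combination of elements of $A_q$, and the convexity assumption in Definition~\ref{def:Finsler}(i) then forces $\tau_q \in A_q$. (Strictly speaking, convexity of $A_q$ is stated for combinations of two vectors, but this extends inductively to any finite convex combination in the standard way.) There is no genuine obstacle here; the only point that requires care is ensuring that each $X_i$ takes values in $A$ on its whole neighborhood, which is why the neighborhoods $U_p$ must be shrunk to lie inside the open set $A$ in $TM$.
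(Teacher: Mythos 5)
Your proof is correct and takes essentially the same route as the paper: construct local vector fields with values in $A$ (possible since $A$ is open), patch with a partition of unity subordinate to a locally finite cover, and invoke the convexity of $A_p$ to conclude that the convex combination again lies in $A_p$. The only difference is that you spell out the elementary details (shrinking neighborhoods, inductive extension of two-vector convexity) that the paper leaves implicit.
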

\begin{proof}

Observe that at each point $p\in M$ we can find easily a neighborhood $V_p$ of $p$ that admits a vector field $T^p$ satisfying $T^p(q)\in A$ for all $q\in V_p$. Consider now the covering $\{V_p\}_{p\in M}$ of $M$. As $M$ is paracompact, we can extract a locally finite covering $\{V_i\}_{i\in I}$, by virtue of which the vector field $\tau =\sum_{i\in I} \mu_i T^i$ will be well-defined, where $\mu_i$ is a partition of the unity associated to the covering.  Furthermore, $\tau_p \in A_p$ for every $p\in M$ because of the convexity of $A_p$.
\end{proof}

\section{Cartan tensor, Chern connection, and curvature}
\label{section:curvature}
 The contents of this section apply more generally to any (conic) \emph{pseudo-Finsler metric,} that is, a pair $(M,L)$ as in Definition \ref{def:Finsler}, but with the additional requirements that $L$ be smooth on $A$ and that the fundamental tensors be nondegenerate (though they are not required to have Lorentzian index and $L$ can be zero or negative).  Consult \cite{Ja13,Ja14,JaSo14} for a more thorough discussion of much of the contents of this section.

Let us begin by defining the \emph{Cartan tensor} associated to any pseudo-Finsler metric $L\colon A\subset TM\lra \R$ to be
\begin{equation}\label{cartantensor}
C_v(w_1,w_2,w_3)\ =\ \frac 14\left. \frac{\partial^3}{\partial s_3\partial s_2\partial s_1}L\left(v + \sum_{i=1}^3 s_iw_i\right)\right|_{s_1=s_2=s_3=0},
\end{equation}
for $v\in A$ and $w_1,w_2,w_3\in T_{\pi(v)} M$.
Observe that $C_v$ is symmetric (its value does not depend on the order of $w_1$, $w_2$, and $w_3$), positive homogeneous of degree $-1$ ($C_{\lambda v}=\frac{1}{\lambda} C_v$ for every $v\in A$ and $\lambda>0$), and 
\begin{equation}\label{propCartan}
C_v(v,w_1,w_2)\ =\ C_v(w_1,v,w_2)\ =\ C_v(w_1,w_2,v)\ =\ 0
\end{equation}
(see, e.g., \cite[subsection 2.2]{JaSo14}).  Now fix an open subset $\Omega\subset M$, denote by $\mathfrak{X}(\Omega)$ the space of smooth vector fields on $\Omega$, and define a vector field $V$ on $\Omega$ to be \emph{$L$-admissible} if $V(p) \in A_p$ for every $p\in \Omega$. The Chern connection can be interpreted as an affine connection $\nabla^V$ which is torsion-free and almost compatible with $g$ (see, e.g., \cite[Section 2.1]{Ja13} and references therein).  Moreover, given a smooth curve $\gamma$ and an $L$-admissible vector field $W$ along $\gamma$, the Chern connection determines a covariant derivative along $\gamma$ with reference vector $W$, denoted by $D_{\gamma}^W$, that is almost $g$-compatible, namely,
\begin{equation}\label{almostmetric}
\frac{d}{dt}\left(g_{W}(X,Y)\right)\ =\ g_{W}(D_\gamma^WX,Y)\ +\ g_{W}(X,D_\gamma^WY)\ +\ 2C_{W}(D_\gamma^WW,X,Y),
\end{equation}
for any $X,Y\in\mathfrak{X}(\gamma)$.
We then define a {\it geodesic} of $(M,L)$ to be a smooth $L$-admissible curve $\gamma$ such that $D_\gamma^{\dot\gamma}\dot\gamma=0$.  We say that a vector field $X$ along $\gamma$ is  {\it $\gamma$-parallel} if $D^{\dot\gamma}_{\gamma}X=0$. Observe that if $\gamma$ is a geodesic and $X$ is $\gamma$-parallel along $\gamma$, then $g_{\dot\gamma}(\dot\gamma,X)$ is constant along $\gamma$ by \eqref{almostmetric}, and if $X$ and $Y$ are $\gamma$-parallel, then $g_{\dot\gamma}(X,Y)$ is also constant along $\gamma$. In particular, given a $g_{\dot\gamma}$-orthonormal system in $T_{\gamma(a)}M$, $\gamma$-parallel transport gives an orthonormal system along $\gamma$.  

Using the Chern connection, we can define for every $L$-admissible vector field $V \in {\mathfrak{X}(\Omega)}$ the curvature tensor associated to $V$ as  
\[R^V(X,Y)Z\ =\ \nabla^V_X\nabla^V_YZ-\nabla^V_Y\nabla^V_XZ-\nabla^V_{[X,Y]}Z,\]
for every $X,Y,Z\in \mathfrak{X}(\Omega)$.  Moreover, if we define
\begin{equation}\label{Cherntensor}
P_V(X,Y,Z)\ =\ \left.\frac{\partial}{\partial t}\left( \nabla_X^{V+tZ}Y\right)\right.\bigg|_{t=0},
\end{equation}
then 
\begin{equation}\label{CurvatureRel}
R_V(X,Y)Z\ =\ R^V(X,Y)Z\ -\ P_V(Y,Z,\nabla^V_XV)\ +\ P_V(X,Z,\nabla^V_YV),
\end{equation}
where $R_V(X,Y)Z$ is the Chern curvature tensor defined in \cite[Formula (3.3.2) and Exercise 3.9.6]{bao2000}. The tensor $R_V$ does not depend on the extension of $V$ used to compute it,  and it is used to compute the flag curvature of a pseudo-Finsler metric (see \cite{Ja14}).  In fact, for every $v\in A$ with $p=\pi(v)$, we can define a trilinear map $R_v:T_pM\times T_pM\times T_pM\lra T_pM$ in such a way that, if $V,X,Y,Z$ are extensions to $\Omega$ of $v,x,y,z\in T_pM$, then
$R_v(x,y)z=(R_V(X,Y)Z)(p).$  In order to compute the flag curvature, we have to fix a flagpole $v\in A$ and a vector $w\in T_{\pi(v)}M$ with $L(v)L(w)-g_v(v,w)^2\not=0$.  Then
\[K_v(w)\ =\ \frac{g_v(R_v(v,w)w,v)}{L(v)L(w)-g_v(v,w)^2}\cdot\]
We can also define the flag curvature along curves, as follows.
Given a piecewise smooth variation $\xx\colon [0,b] \times (-\delta,\delta) \lra M$, $(u,v)\mapsto\xx(u,v)$, we henceforth adopt the following notation: $\sigma_v=\xx(\cdot,v)$ and $\beta_u=\xx(u,\cdot)$ for all $u\in[0,b]$ and $v\in(-\delta,\delta)$. Then we define
\beqa
\label{eqn:Rsigma1}
R^\xx(\tilde{Y})\ :=\ D_{\sigma_v}^{\dot\sigma_u}D_{\beta_u}^{\dot\sigma_u}\tilde{Y}-D_{\beta_u}^{\dot\sigma_u}D_{\sigma_v}^{\dot\sigma_u}\tilde{Y}
\eeqa
for any vector field $\tilde{Y}$ along the variation $\xx$ (see \cite[Remark 1.2]{Ja14}). As this quantity depends only on the curve $\sigma$ and the variation field $Z$ of $\xx$, we will write
\beqa
\label{eqn:Rsigma2}
R^\sigma(\dot\sigma,Z)Y\ :=\ R^\xx(\tilde{Y}),
\eeqa
where $Y$ is a vector field along $\sigma$ and $\tilde{Y}$ an extension of $Y$ to the variation $\xx$. In general, we have 
\begin{equation}\label{varCurvRel}
R^\sigma(\dot\sigma,Z)Y\ =\ R_{\dot\sigma}(\dot\sigma,Z)Y\ +\ P_{\dot\sigma}(Z,Y,D_\sigma^{\dot\sigma}\dot\sigma)\ -\ P_{\dot\sigma}(\dot\sigma,Y,D_\sigma^{\dot\sigma}Z)
\end{equation}
(see \cite[Theorem  1.1]{Ja14}), but when $\sigma$ is a geodesic it is the case that
\[R^\sigma(\dot\sigma,Z)\dot\sigma\ =\ R_{\dot\sigma}(\dot\sigma,Z)\dot\sigma\]
(see \cite[Corollary 1.3]{Ja14}) and 
\begin{equation*}
K_v(w)\ =\ \frac{g_v(R^\sigma(\dot\sigma,W)W(t_0),v)}{L(v)g_{v}(w,w)-g_v(v,w)^2},
\end{equation*}
where $\sigma$ is the geodesic passing through $\pi(v)$ with $\dot\sigma(t_0)=v$ and $W$ is any vector field along $\sigma$ satisfying $W(t_0)=w$ (see \cite[Remark 2.3]{Ja14}). Finally, we define the {\it scalar Ricci curvature} as the trace with respect to $g_v$ of the linear operator
\[{\mathbf R}_v\colon T_{\pi(v)}M\times T_{\pi(v)}M\lra\R\hspace{.03in},\hspace{,2in}{\mathbf R}_v(u,w)\ =\ g_v(R_v(v,u)w,v),\]
for any $u,w\in T_{\pi(v)}M$. The scalar Ricci curvature is a positive homogeneous function ${\rm Ric}\colon A\lra\R$  of degree zero.
\begin{lemma}\label{lem:ric}
Let $(M,L)$ be a Finsler spacetime. If $z$ is lightlike and $e_3,\ldots,e_n$ is a system of $(-g_z)$-spacelike, $g_z$-orthonormal vectors that are $g_z$-orthogonal to $z$, then ${\rm Ric}(z)= -\sum_{i=3}^n {\mathbf R} _z(e_i,e_i)$.
\end{lemma}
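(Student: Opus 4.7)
The strategy is to complete $\{e_3, \ldots, e_n\}$ to a basis of $T_pM$ by adjoining $z$ together with a second lightlike vector $\ell$, and then compute the $g_z$-trace directly in this null frame. Since $g_z$ has index $n-1$ by Proposition~\ref{prop:ind}(iii), the $g_z$-orthogonal complement of $\mathrm{span}\{e_3, \ldots, e_n\}$ is a Lorentzian $2$-plane containing the null vector $z$; in such a $2$-plane there is a second null direction, which I would normalize to a vector $\ell$ satisfying $g_z(z, \ell) = 1$. In the basis $\{z, \ell, e_3, \ldots, e_n\}$ the Gram matrix of $g_z$ is
\[
\begin{pmatrix} 0 & 1 & 0 \\ 1 & 0 & 0 \\ 0 & 0 & -I_{n-2} \end{pmatrix},
\]
and this matrix equals its own inverse. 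Unpacking the definition of ${\rm Ric}(z)$ as the $g_z$-trace of the bilinear form ${\mathbf R}_z$ in this basis yields
\[
{\rm Ric}(z) \;=\; {\mathbf R}_z(z, \ell) \;+\; {\mathbf R}_z(\ell, z) \;-\; \sum_{i=3}^{n} {\mathbf R}_z(e_i, e_i).
\]

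The first term vanishes trivially: ${\mathbf R}_z(z, \ell) = g_z(R_z(z, z)\ell, z) = 0$, because the Chern curvature $R_v(\cdot,\cdot)$ is antisymmetric in its first two arguments (as a commutator of covariant derivatives minus the bracket term), so $R_z(z, z) = 0$. It therefore remains to show that ${\mathbf R}_z(\ell, z) = g_z(R_z(z, \ell)z, z) = 0$.

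To handle this term I would prove the pointwise identity $g_V(R_V(X, Y)V, V) = 0$ for every $L$-admissible $V$ and every $X, Y$. Extending $V, X, Y$ to local vector fields and using the almost metric compatibility \eqref{almostmetric} of the Chern connection, one first derives $g_V(\nabla^V_Z V, V) = \tfrac{1}{2} Z(L(V))$, where the Cartan correction $C_V(\nabla^V_Z V, V, V)$ drops out by \eqref{propCartan}. Iterating almost compatibility gives $g_V(\nabla^V_X \nabla^V_Y V, V) = \tfrac{1}{2} XY(L(V)) - g_V(\nabla^V_Y V, \nabla^V_X V)$, and again every Cartan correction vanishes because $V$ inevitably occupies one of the three slots of $C_V$. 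Combining this with the analogous $\nabla^V_{[X,Y]} V$ term produces
\[
g_V(R_V(X, Y)V, V) \;=\; \tfrac{1}{2}\big(XY - YX - [X, Y]\big)(L(V)) \;+\; g_V(\nabla^V_X V, \nabla^V_Y V) \;-\; g_V(\nabla^V_Y V, \nabla^V_X V),
\]
which is zero since $[X, Y] = XY - YX$ on smooth functions and $g_V$ is symmetric.

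The main obstacle is precisely the identity $g_V(R_V(X, Y)V, V) = 0$: in a generic Finsler setting one does not have the clean antisymmetry $g_v(R_v(X, Y)Z, W) = -g_v(R_v(X, Y)W, Z)$ of the Lorentzian case, and this is what would normally deliver the vanishing of the $(\ell, z)$-term. The rescue is that in the particular combination $Z = W = V$ all Cartan obstructions are absorbed by the vanishing relations \eqref{propCartan} whenever $V$ sits in a slot of $C_V$, which is exactly what happens throughout the computation.
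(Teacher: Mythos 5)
Your frame decomposition is sound and is essentially the same reduction the paper makes: the paper uses the pseudo-orthonormal pair $\tilde{e}_1=\tilde{z}+z$, $\tilde{e}_2=\tilde{z}-z$ where you use the null pair $\{z,\ell\}$, and in both cases everything boils down to the single identity $g_z(R_z(z,\cdot)z,z)=0$, with the other cross term killed by antisymmetry of $R_z$ in its first two slots. The paper does not prove this identity from scratch; it cites the symmetry of $R^V$ from \cite[Proposition 3.1]{Ja13} applied to a \emph{geodesic} vector field $V$ extending $z$, together with \cite[Lemma 1.2]{Ja14} and \cite[Lemma 3.10]{JaSo14}.

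Your attempt to prove that identity directly has a genuine gap: the quantity you compute is $g_V(R^V(X,Y)V,V)$ for the curvature $R^V$ of the affine connection $\nabla^V$ (a commutator of covariant derivatives minus the bracket term), and your argument via \eqref{almostmetric} and \eqref{propCartan} does correctly show that \emph{this} vanishes. But ${\mathbf R}_z$ and ${\rm Ric}$ are defined through the Chern curvature tensor $R_V$, which differs from $R^V$ by the $P_V$-terms in \eqref{CurvatureRel}. What you actually need is
\[
g_V(R_V(X,Y)V,V)\ =\ g_V(R^V(X,Y)V,V)\ -\ g_V\bigl(P_V(Y,V,\nabla^V_XV),V\bigr)\ +\ g_V\bigl(P_V(X,V,\nabla^V_YV),V\bigr)\ =\ 0,
\]
and your computation says nothing about the two $P_V$-terms; the vanishing relations \eqref{propCartan} concern the Cartan tensor $C_V$, not the Chern tensor $P_V$, so the claim that ``all Cartan obstructions are absorbed'' does not dispose of them. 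The gap is repairable: either choose the extension $V$ of $z$ to be a geodesic vector field, as the paper does, so that the relevant $P_V$-terms can be controlled via the cited lemmas, or invoke the identity $g_V(P_V(X,V,W),V)=0$, which follows from the relation between $P_V$ and the Landsberg tensor (cf.\ the discussion in Remark \ref{rem:Chern}). As written, however, the step you yourself flag as ``the main obstacle'' is exactly the step that is not established.
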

\begin{proof}
Let $ \tilde{z}\in T_{\pi(z)}M$ be a vector in the $g_z$-orthogonal vector space to ${\rm span}\{e_3,\ldots,e_n\}$ satisfying $g_z(\tilde{z},\tilde{z})=0$ and $g_z(z,\tilde{z}) = -1/2$.  Then  the vectors $\tilde{e}_1 := \tilde{z}+z$ and $\tilde{e}_2 := \tilde{z} - z$ are $-g_{z}$-unit timelike and spacelike, respectively.  So with respect to the $g_{z}$-orthonormal frame $\tilde{e}_1,\tilde{e}_2,e_3,\dots,e_n$, taking the trace yields
\beqa
\text{Ric}(z) &=& \underbrace{\,g_z(R_z(z,\tilde{e}_1)\tilde{e}_1,z)\, -\, g_z(R_z(z,\tilde{e}_2)\tilde{e}_2,z)}_{0}\ -\ \sum_{i=3}^n\,g_z(R_z(z,e_i)e_i,z)\nonumber\\
&=& -\sum_{i=3}^n\,g_z(R_z(z,e_i)e_i,z)\ =\ -\sum_{i=3}^n {\mathbf R}_z(e_i,e_i),\nonumber
\eeqa
 because
$$\ip{R_z(z,\tilde{z}+z)(\tilde{z}+z)}{z}{z}\ =\ \ip{R_z(z,\tilde{z}-z)(\tilde{z}-z)}{z}{z}.
$$
For the last relation, observe that $R_z$ is anti-symmetric in the first two components and
$g_z(R_z(z,\tilde{z})z,z)=0$ (this can be checked using the symmetry of $R^V$ in \cite[Proposition 3.1]{Ja13} with $V$ a geodesic field extending $z$, along with \cite[Lemma 1.2]{Ja14} and \cite[Lemma 3.10]{JaSo14}).
\end{proof}

\section{Spacelike submanifolds and Jacobi fields}
\label{section:jacobi}
 Given $p\in M$, we will say that a subspace $Q \subset T_pM$ is {\it spacelike} if it consists entirely of 
spacelike vectors. A submanifold $P\subset M$ is {\it spacelike} if $T_pP\subset T_pM$ is spacelike for every $p\in P$.  Given $p \in P$, we say that a vector $z\in A_p$ is \emph{orthogonal to $P$} if $g_z(z,v)=0$ for every $v\in T_pP$. 
Suppose now that $z$ is orthogonal to $P$ and $g_z|_{T_pP\times T_pP}$ is nondegenerate.  Then we have the splitting 
 \begin{equation}\label{split}
 T_pM\ =\ T_pP\ \oplus\ (T_pP)_z^\perp,
 \end{equation}
where $(T_pP)_z^\perp$ is the subspace of vectors orthogonal to $T_pP$. We define the {\it second fundamental form of $P$ in the direction $z$}, denoted by $S_z^P\colon T_pP\times T_pP\lra (T_pP)_z^\perp$, as $S_z^P(u,w)= {\rm nor}^P_z (\nabla^Z_UW)_p$, where ${\rm nor}^{P}_z$ is the projection to $(T_pP)_z^\perp$ via the splitting \eqref{split} and $Z$, $U$, and $W$  are arbitrary extensions of $z$, $u$ and $w$ in such a way that $U$ and $W$ are tangent to $P$ along $P$.  Moreover, we define the {\it normal second fundamental form of $P$ in the direction $z$}, denoted by $\tilde{S}_z^P\colon T_pP\lra T_pP$, as $\tilde{S}_z^P(u)={\rm tan}^P_z (\nabla^Z_UZ)_p$, with ${\rm tan}^P_z$ being the projection onto $T_pP$ by \eqref{split}. Observe that $S_z^P$ and $\tilde{S}_z^P$ are well-defined. Furthermore,  $S_z^P$ is bilinear and symmetric, $\tilde{S}_z^P$ is linear, and 
\begin{equation}\label{relationS}
g_z(S^P_z(u,w),z)\ =\ -g_z(\tilde{S}_z^P(u),w).
\end{equation}
 When we fix a smooth vector field $N$  along $P$ which is orthogonal to $P$ at every point, then $S_N^P$ and $\tilde{S}_N^P$ determine tensors (see \cite[subsection 3.1]{Ja13}). We will define {\it the mean curvature vector field of $P$ in the direction $z$} (with $z$ being orthogonal to $P$ at $p$), denoted by $H^P_z$, as the trace of $S_z^P$ with respect to $g_z$.  Finally, we will see that the second fundamental form is always defined for spacelike submanifolds.
\begin{lemma}
\label{lemma:spacesub}
 Let $P$ be a spacelike submanifold in a Finsler spacetime $(M,L)$ with $p \in P$ and $z \in \hat{A}_p$ a causal vector at which $L$ is smooth, $g_z$ is nondegenerate, and that is orthogonal to P at $p$.  Then $g_z|_{T_pP \times T_pP}$ is nondegenerate.
\end{lemma}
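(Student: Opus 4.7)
The plan is to split into the two possible cases for the causal vector $z$, namely timelike ($z \in A_p$) and lightlike ($z \in \mathcal{C}_p$), and in each case exploit the fact that $T_pP$ lies inside the $g_z$-orthogonal complement $\{z\}^{\perp_{g_z}}$, on which one has very good control of the signature via Proposition \ref{prop:ind}. The endgame in both cases is to show that $g_z|_{T_pP\times T_pP}$ is in fact negative definite; from that, nondegeneracy is immediate.

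First I would fix the general observation that $T_pP\subset \{z\}^{\perp_{g_z}}$, which is just the definition of $z$ being orthogonal to $P$. Moreover, since $P$ is spacelike every nonzero $v\in T_pP$ is a spacelike vector, while every nonzero scalar multiple of the causal vector $z$ is causal; hence $T_pP\cap \mathrm{span}\{z\}=\{0\}$. These two facts are what the rest of the argument relies on.

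Next I treat the timelike case. By Proposition \ref{prop:ind}$(i)$, the form $g_z$ has index $n-1$, and by \eqref{eq:dL} one has $g_z(z,z)=L(z)>0$. A nondegenerate symmetric bilinear form of index $n-1$ restricted to the orthogonal complement of a vector of positive norm is negative definite of dimension $n-1$. Since $T_pP\subset \{z\}^{\perp_{g_z}}$, the restriction $g_z|_{T_pP\times T_pP}$ is negative definite, hence nondegenerate.

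For the lightlike case I would invoke Proposition \ref{prop:ind}$(iii)$, which says that $g_z$ restricted to $T_z\mathcal{C}_p=\{z\}^{\perp_{g_z}}$ is negative semidefinite and that its kernel on this hyperplane is exactly the line $\mathrm{span}\{z\}$ (the unique degenerate direction, as noted in the proof of Proposition \ref{prop:ind}). Since $T_pP\subset \{z\}^{\perp_{g_z}}$ and $T_pP\cap \mathrm{span}\{z\}=\{0\}$ by the opening observation, the composition $T_pP\hookrightarrow \{z\}^{\perp_{g_z}}\twoheadrightarrow \{z\}^{\perp_{g_z}}/\mathrm{span}\{z\}$ is injective, and on the quotient the induced form is negative definite. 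Thus $g_z|_{T_pP\times T_pP}$ is again negative definite, so nondegenerate, and the lemma follows. I do not anticipate any genuine obstacle here; the only subtle point is making sure one correctly identifies the kernel of $g_z$ restricted to the null hyperplane in the lightlike case, which is already contained in the statement and proof of Proposition \ref{prop:ind}.
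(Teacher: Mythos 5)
Your proof is correct and follows essentially the same route as the paper: in the timelike case both use that $g_z$ has index $n-1$ and $g_z(z,z)>0$ to get negative definiteness on $z^{\perp_{g_z}}\supset T_pP$, and in the lightlike case both use that the radical of $g_z$ on the null hyperplane is exactly $\mathrm{span}\{z\}$, which a spacelike $T_pP$ cannot meet. Your quotient argument in the lightlike case merely makes explicit a step the paper leaves implicit.
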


\begin{proof}
If $z$ is timelike, then because $g_z$ has index $n-1$ and $g_z(z,z)>0$, it follows that $g_z$ is negative-definite on the  subspace that is $g_z$-orthogonal to $z$. In particular, $P$ is contained in this orthogonal subspace and is nondegenerate.  If $z$ is lightlike, then the $g_z$-orthogonal subspace to $z$ is degenerate, since $g_z(z,z)=0$, but the degenerate line is precisely the one in the direction of $z$. As a spacelike submanifold cannot contain this line, it has to be nondegenerate.
\end{proof}
\begin{prop}\label{codim2}
Let $(M,L)$ be a Finsler spacetime and $P$ a spacelike submanifold of $M$ of codimension $2$. Then at every point $p\in P$ there are exactly two future-pointing lightlike directions orthogonal to $P$.
\end{prop}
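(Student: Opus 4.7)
My plan is to quotient out $Q := T_pP$ and reduce the statement to a two-dimensional convex-geometric question. Let $\pi\colon T_pM \to T_pM/Q \cong \R^2$ be the linear quotient and set $K := \pi(\bar{A}_p)$. The goal is to show that $K$ is a closed, salient convex cone with non-empty interior\,---\,hence a wedge bounded by exactly two distinct rays $R_1,R_2$ from the origin\,---\,and that each $R_i$ lifts to a unique future-pointing lightlike direction orthogonal to $P$.

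First I would verify the structure of $K$. Convexity and the cone property are inherited from $\bar{A}_p$. The interior is non-empty because $\pi$ is an open surjection, so $\pi(A_p)$ is open and non-empty. Closedness comes from the fact that $\pi|_{\bar{A}_p}$ is proper: since $Q$ is spacelike we have $Q\cap \hat{A}_p=\emptyset$, and a homogeneity argument (any sequence $v_n\in\bar{A}_p$ with $\|v_n\|\to\infty$ and $\pi(v_n)$ bounded would, after rescaling by $\|v_n\|$, produce a unit limit in $Q\cap\bar{A}_p\setminus\{0\}=\emptyset$) shows that preimages of compact sets are compact. Finally, $K$ is salient: if it contained a line, then vectors $v_1,v_2 \in \hat{A}_p$ with $\pi(v_1)=-\pi(v_2)\neq 0$ would satisfy $v_1+v_2 \in Q$; but $v_1+v_2$ lies in $\hat{A}_p$ by convexity and is non-zero by salience of $\hat{A}_p$, contradicting $Q\cap \hat{A}_p=\emptyset$.

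The heart of the argument is to show that, for any future-pointing lightlike $v\in \mathcal{C}_p\setminus\{0\}$, the Finslerian orthogonality condition $g_v(v, X)=0$ for all $X\in Q$ is equivalent to $\pi(v)\in\partial K$. Proposition~\ref{Lorentznorm}(ii) combined with Proposition~\ref{prop:ind}(ii) gives $g_v(v,\cdot)\geq 0$ on $\bar{A}_p$ with kernel exactly $T_v\mathcal{C}_p$, so $g_v(v,\cdot)$ is a non-trivial supporting functional of $\bar{A}_p$ at $v$. If $v\perp P$, then $g_v(v,\cdot)$ vanishes on $Q$ and thus descends to a non-trivial supporting functional for $K$ at $\pi(v)$, forcing $\pi(v)\in\partial K$. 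Conversely, a supporting line of $K$ at $\pi(v)$ pulls back via $\pi$ to a supporting hyperplane of $\bar{A}_p$ at $v$ that contains $Q$; by smoothness of $\mathcal{C}_p$ (Definition~\ref{def:Finsler}(iv)), the supporting hyperplane of $\bar{A}_p$ at $v$ is unique and equals $T_v\mathcal{C}_p$, so $Q\subset T_v\mathcal{C}_p$, i.e., $v\perp P$.

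Finally, I would show that each boundary ray $R_i$ of $K$ lifts to a unique direction in $\mathcal{C}_p$. If $v_1,v_2\in\mathcal{C}_p$ satisfy $\pi(v_1)=\pi(v_2)\in R_i\setminus\{0\}$, then the segment $[v_1,v_2]$ lies in $\bar{A}_p$ by convexity and projects to a single point of $\partial K$; if any interior point of the segment were in $A_p$, its $\pi$-image would lie in the open set $\pi(A_p)\subset\mathrm{int}(K)$, contradicting $\pi(v_1)\in\partial K$. So $[v_1,v_2]\subset\mathcal{C}_p$, and in fact $[v_1,v_2]\subset\mathcal{C}_p\cap T_{v_1}\mathcal{C}_p=\R_{>0}v_1$ by Proposition~\ref{prop:ind}(v), forcing $v_2=\lambda v_1$ for some $\lambda>0$. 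Combined with the two distinct boundary rays of $K$, this yields exactly two future-pointing lightlike directions orthogonal to $P$. I expect the main obstacle to be the reverse direction in the orthogonality-boundary equivalence: unlike in the Lorentzian case, Finsler orthogonality depends nonlinearly on $v$, and one must invoke smoothness and strict convexity of $\mathcal{C}_p$ to secure uniqueness of the supporting hyperplane at $v$, thereby recovering the pointwise condition $Q\subset T_v\mathcal{C}_p$ from its quotient counterpart.
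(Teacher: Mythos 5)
Your proof is correct, and it takes a genuinely different route from the paper's. The paper \emph{slices}: choosing an affine spacelike hyperplane $W$ as in Proposition~\ref{prop:ind}(iv), it works with the compact, strongly convex ``sphere'' $\mathcal{C}_p\cap W$, translates $T_pP$ (which has codimension one inside $W$) until it touches $\mathcal{C}_p\cap W$, and uses strict convexity to conclude there are exactly two points of tangency, which are then converted into orthogonal lightlike directions via Proposition~\ref{prop:ind}(ii). Your argument instead \emph{quotients} by $Q=T_pP$, pushing $\bar A_p$ down to a closed, salient convex cone $K$ with non-empty interior in $T_pM/Q\cong\R^2$\,---\,a wedge with exactly two boundary rays\,---\,and then you characterize lightlike orthogonality as ``$\pi(v)\in\partial K$'' via the uniqueness of supporting hyperplanes at smooth boundary points, with Proposition~\ref{prop:ind}(v) securing uniqueness of the lift along each boundary ray. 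Both proofs ultimately draw on the same geometric inputs (smoothness and strict convexity of $\mathcal{C}_p$, the description $T_v\mathcal{C}_p=\{w:g_v(v,w)=0\}$, and the ray-intersection statement of Proposition~\ref{prop:ind}(v)), but your quotient version makes the counting step truly elementary and makes the role of codimension two transparent (it is exactly what makes the quotient two-dimensional), at the price of a bit more work in the orthogonality/boundary equivalence. One small remark: in the uniqueness-of-lift step, rather than arguing that the whole segment $[v_1,v_2]$ lies in $T_{v_1}\mathcal{C}_p$, you can get there faster by noting that $v_1-v_2\in Q\subset T_{v_1}\mathcal{C}_p$ (both lifts are orthogonal to $P$ by your Step~2), so $v_2\in T_{v_1}\mathcal{C}_p\cap\mathcal{C}_p=\R_{>0}v_1$ directly from Proposition~\ref{prop:ind}(v); this avoids any appeal to the segment argument beyond establishing that $v_1,v_2\in\mathcal{C}_p$.
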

\begin{proof}
Consider a spacelike hyperplane $W\subset T_pM$ such that $T_pP\subset W$.  Then by part $(iv)$ of Proposition \ref{prop:ind},  the intersection  ${\mathcal C}_p\cap W$  is a compact strongly convex subset homeomorphic to a sphere and the tangent planes to it touch only at one point. Therefore $T_pP$ is tangent to ${\mathcal C}_p \cap W$ at exactly two points $v_1,v_2$, because ${\mathcal C}_p \cap W$ remains in one of the half-spaces determined by $T_pP$ (if we consider an affine hyperplane $Q$ parallel  to $T_pP$ which does not intersect ${\mathcal C}_p \cap W$, then $v_1$ and $v_2$ are  the points that maximize and minimize the distance with respect to $Q$).  In particular, $T_pP$ is tangent to ${\mathcal C}_p$ at  $v_1,v_2$ and, by homogeneity, at the rays from the origin passing through them. By part $(ii)$ of Proposition  \ref{prop:ind}, this means that $v_1,v_2$ give all the lightlike directions orthogonal to $P$.
\end{proof}

Consider a geodesic $\sigma\colon[0,b]\lra M$ in a Finsler spacetime $(M,L)$ as in Section \ref{section:curvature}, namely, assume that $L$ is smooth on $\dot\sigma(s)$ for $s\in [0,b]$.  We then say that a vector field along $\sigma$ is a {\it Jacobi field} if it satisfies the equation
\[ J''\ =\ R^\sigma(\dot\sigma,J)\dot\sigma.\]
Henceforth we use the notation $J'=D_{\sigma}^{\ds} J$.
Next, given a submanifold $P$ of $M$ such that $\sigma$ is $g_{\dot\sigma(0)}$-orthogonal to $P$ at $\sigma(0)$, we say that the Jacobi field $J$ is {\it $P$-Jacobi} if $J(0)$ is tangent to $P$ and ${\rm tan}^P_{\dot\sigma}J'(0)=\tilde{S}^P_{\dot\sigma}(J(0))$.  For Section \ref{section:focal} below, we note here that the vector space of all $P$-Jacobi fields along $\sigma$ that are $g_{\ds}$-orthogonal to $\sigma$ has dimension $n-1$ (see \cite[Lemma 3.14]{JaSo14} and \cite[p. 283]{o1983}).  Moreover, we say that an instant $t_0\in (0,b]$ is {\it $P$-focal} if there exists a nonzero $P$-Jacobi field such that $J(t_0)=0$.

\begin{lemma}\label{P-Jacobi}
If $J_1$ and $J_2$ are $P$-Jacobi fields along a geodesic $\sigma\colon [0,b] \lra M$, then $g_{\dot\sigma}(J_1,J'_2) = g_{\dot\sigma}(J'_1,J_2)$.
\end{lemma}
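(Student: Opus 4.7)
The plan is to mimic the classical Lorentzian argument: show that the function $f(t) := g_{\dot\sigma}(J_1,J'_2) - g_{\dot\sigma}(J'_1,J_2)$ is constant along $\sigma$, and then prove that $f(0)=0$ using the $P$-Jacobi boundary conditions. Both steps adapt cleanly to the Finslerian setting because $\sigma$ is a geodesic, so the Cartan correction term in the almost-compatibility formula \eqref{almostmetric} drops out.

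For constancy, I would differentiate $f$ using \eqref{almostmetric} along the geodesic $\sigma$. Since $D_\sigma^{\dot\sigma}\dot\sigma=0$, the term $2C_{\dot\sigma}(D_\sigma^{\dot\sigma}\dot\sigma,\cdot,\cdot)$ vanishes identically, giving genuine metric compatibility for $g_{\dot\sigma}$ acting on vector fields along $\sigma$. After cancellation of the mixed $g_{\dot\sigma}(J'_1,J'_2)$ terms, one is left with
\[
f'(t)\ =\ g_{\dot\sigma}(J_1,J''_2)\ -\ g_{\dot\sigma}(J''_1,J_2).
\]
Invoking the Jacobi equation and the fact (stated after \eqref{varCurvRel}) that along a geodesic $R^\sigma(\dot\sigma,J_i)\dot\sigma=R_{\dot\sigma}(\dot\sigma,J_i)\dot\sigma$, this becomes $g_{\dot\sigma}(J_1,R_{\dot\sigma}(\dot\sigma,J_2)\dot\sigma) - g_{\dot\sigma}(R_{\dot\sigma}(\dot\sigma,J_1)\dot\sigma,J_2)$. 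The symmetry of the flag curvature operator $w\mapsto R_{\dot\sigma}(\dot\sigma,w)\dot\sigma$ with respect to $g_{\dot\sigma}$ — a property that follows from the curvature symmetries of the Chern connection recorded in \cite[Proposition 3.1]{Ja13}, \cite[Lemma 1.2]{Ja14}, and \cite[Lemma 3.10]{JaSo14} (exactly the package already cited at the end of Lemma \ref{lem:ric}) — forces $f'(t)=0$.

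To evaluate $f(0)$, I would use the splitting \eqref{split} at $p=\sigma(0)$ with $z=\dot\sigma(0)$, writing $J'_i(0)=\mathrm{tan}^P_z J'_i(0) + \mathrm{nor}^P_z J'_i(0)$. Since $J_i(0)\in T_pP$ and the normal components lie in $(T_pP)_z^\perp$, the normal parts contribute zero to $g_z(J_j(0),J'_i(0))$ by the very definition of $(T_pP)_z^\perp$. Using the $P$-Jacobi condition $\mathrm{tan}^P_z J'_i(0)=\tilde S^P_z(J_i(0))$, we therefore get
\[
f(0)\ =\ g_z\!\left(J_1(0),\tilde S^P_z(J_2(0))\right)\ -\ g_z\!\left(\tilde S^P_z(J_1(0)),J_2(0)\right),
\]
and self-adjointness of $\tilde S^P_z$ on $(T_pP,g_z|_{T_pP})$ — immediate from \eqref{relationS} together with the symmetry of the bilinear form $S^P_z$ — gives $f(0)=0$.

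The only delicate point is the curvature symmetry used in the first step; in a general Finsler setting there are Cartan/Chern correction terms that obstruct the naive symmetries of $R_v$, but the particular combination appearing here (the flag operator attached to $\dot\sigma$ along a geodesic) is precisely the one that remains symmetric. Once that is granted, the rest is a straightforward translation of the Lorentzian proof, so I expect no further obstacles.
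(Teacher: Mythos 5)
Your proof is correct and is essentially the argument the paper intends: the paper simply defers to the Lorentzian proof (constancy of the Wronskian-type quantity plus vanishing at $t=0$ from the $P$-Jacobi conditions), citing \cite[Proposition~3.18]{JaSo14} for the Finslerian curvature symmetry that you supply via the same package of references used in Lemma~\ref{lem:ric}. Both steps — the vanishing of the Cartan term along a geodesic and the self-adjointness of $\tilde S^P_z$ from \eqref{relationS} — are exactly the points the paper relies on.
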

\begin{proof}
Taking into account \cite[Proposition~3.18]{JaSo14}, the proof is the same as in the Lorentzian case.
\end{proof}

Because in the following lemma we speak about the first focal point along a geodesic, and because our definition of Finsler spacetime does not ensure the existence of convex neighborhoods, we observe here that, using an idea similar to \cite[Remark 3.2]{JaPi06a}, we can show that there are no focal points arbitrarily close to the initial point and hence, as focal points constitute a closed subset of $(0,b]$, there necessarily exists a first focal point.  Indeed, in the notation of \cite[Remark 3.2]{JaPi06a}, and assuming that the first $r$ coordinates in the system generate the tangent space to $P$, with $S:\R^{r}\rightarrow \R^r$ the coordinate counterpart of $\tilde{S}^P_{\dot\sigma}$, an instant $t$ is $P$-focal if and only if the linear map $A(t):\R^r\times \R^{n-1} \lra \R^r\times \R^{n-1}$, defined as
\[ A(t)(\tilde{v},\tilde{w})\ =\ \Phi_{11}(t)  
\left(\!\!\begin{array}{c}
\tilde{v}\\
0
\end{array}\!\!\right)
\ +\ \Phi_{12}(t)
\left(\!\!\begin{array}{c}
S(\tilde{v})\\
\tilde{w}
\end{array}\!\!\right),
\]
is singular.  Because $A(a) (\tilde{v},\tilde{w})=(\tilde{v},0)$ and $A'(a)(\tilde{v},\tilde{w})=(S(\tilde{v}),w)$, we easily conclude that it is nonsingular for all $t\in (a,a+\vep)$, for $\vep > 0$ small enough.

\begin{lemma}\label{Jacobi-P2}
Let $P$ be a spacelike submanifold of a Finsler spacetime $(M,L)$ and $\sigma\colon[0,b]\lra M$
a geodesic that is orthogonal to $P$ at $\sigma(0) \in P$.  Assume that $r > 0$ is the first focal point of $P$ along $\sigma$ and let $J$ be a nonzero $P$-Jacobi field along $\sigma$ that satisfies $J(r) = 0$.  Suppose in addition that $J(0) \in T_{\sigma(0)}P$ is nonzero. Then the following are true:
\begin{enumerate}[(i)]
\item $J$ can never be tangent to $\sigma$ on $(0,r)$ and is in fact $g_{\ds}$-orthogonal to $\sigma$,
 \item $J'(r)$ is not tangent to $\sigma$ at $r$,
 \item $J'$ is $g_{\ds}$-orthogonal to $\sigma$.
\end{enumerate}
\end{lemma}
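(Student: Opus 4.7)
My plan is to prove the $g_{\ds}$-orthogonality statement in (i) first; (iii) and the non-tangency half of (i) then fall out from it by short arguments, and (ii) is independent. Throughout, since $\sigma$ is a geodesic we have $D_\sigma^{\ds}\ds = 0$, so the Cartan correction in \eqref{almostmetric} vanishes whenever one of the input vector fields is $\ds$.

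Set $f(t) := g_{\ds(t)}(J(t), \ds(t))$. Applying \eqref{almostmetric} twice with $D_\sigma^{\ds}\ds = 0$ gives $f' = g_{\ds}(J', \ds)$ and $f'' = g_{\ds}(J'', \ds)$, and the Jacobi equation together with the coincidence $R^\sigma(\ds, J)\ds = R_{\ds}(\ds, J)\ds$ along a geodesic (Section \ref{section:curvature}) reduces $f''$ to $g_{\ds}(R_{\ds}(\ds, J)\ds, \ds)$. This vanishes by the Chern-curvature symmetries invoked at the close of the proof of Lemma \ref{lem:ric}, so $f$ is affine. Since $J(0) \in T_{\sigma(0)}P$ and $\ds(0)$ is orthogonal to $P$, we get $f(0) = 0$, and $J(r) = 0$ forces $f(r) = 0$; hence $f \equiv 0$. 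One more application of \eqref{almostmetric} to this identity, again using $D_\sigma^{\ds}\ds = 0$, yields $g_{\ds}(J', \ds) \equiv 0$, which is (iii).

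To finish (i), suppose $J(t_0) = c\,\ds(t_0)$ at some $t_0 \in (0, r)$. The case $c = 0$ contradicts the minimality of $r$ directly. Otherwise, $K(t) := t\,\ds(t)$ is a Jacobi field (since $K'' = 0$ and $R^\sigma(\ds, t\ds)\ds = 0$ by antisymmetry), so $Y := J - (c/t_0)K$ is a Jacobi field with $Y(t_0) = 0$ and $Y(0) = J(0) \in T_{\sigma(0)}P$. Because $\ds(0)$ is orthogonal to $P$, the splitting \eqref{split} gives ${\rm tan}^P_{\ds(0)}\ds(0) = 0$, and hence the tangential $P$-Jacobi initial condition is preserved: ${\rm tan}^P_{\ds(0)} Y'(0) = {\rm tan}^P_{\ds(0)} J'(0) = \tilde{S}^P_{\ds(0)}(J(0)) = \tilde{S}^P_{\ds(0)}(Y(0))$. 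Thus $Y$ is a $P$-Jacobi field with an earlier zero; minimality of $r$ forces $Y \equiv 0$, so $J = (c/t_0)\,t\,\ds$ and $J(0) = 0$, contradicting the hypothesis. For (ii), if $J'(r) = c\,\ds(r)$ then the Jacobi field $t \mapsto c(t-r)\,\ds(t)$ has the same value and derivative as $J$ at $r$; uniqueness for the Jacobi ODE forces $J(t) = c(t-r)\,\ds(t)$, so $J(0) = -cr\,\ds(0)$. Since $P$ is spacelike and $\ds(0)$ is causal, $\ds(0) \notin T_{\sigma(0)}P$, so $c = 0$ and $J \equiv 0$, contradicting that $J$ is nonzero.

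The main obstacle is the non-tangency step: the condition $J(t_0) = c\,\ds(t_0)$ with $c \neq 0$ is not itself a $P$-focal condition, so one must engineer a genuine $P$-Jacobi field that vanishes at $t_0$. The subtraction $J - (c/t_0)\,t\,\ds$ succeeds on both counts: $t\,\ds$ is a Jacobi field (a curvature antisymmetry), and the orthogonality $\ds(0) \perp P$ ensures that the subtraction preserves the tangential $P$-Jacobi initial condition.
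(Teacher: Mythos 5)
Your proof is correct and follows the same route the paper intends: the paper's proof for this lemma simply refers to \cite[Lemma 3.17]{JaSo14} and "the same lines as in the Lorentzian case," which is exactly what you have carried out explicitly---linearity of $g_{\ds}(J,\ds)$ via the vanishing of the Cartan correction along the geodesic and the curvature symmetry $g_z(R_z(z,\cdot)z,z)=0$, subtraction of the auxiliary Jacobi field $t\,\ds$ to kill a tangential value, and the spacelike-versus-causal clash between $T_{\sigma(0)}P$ and $\ds(0)$ for part (ii). No gaps; this is the standard argument adapted to the Finsler setting with the correct technical ingredients cited.
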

\begin{proof}
Taking into account \cite[Lemma 3.17]{JaSo14}, the proof follows the same lines as in the Lorentzian case.
\end{proof}

\section{Focal points in Finsler spacetimes}
\label{section:focal}
 In this section we lay out in detail the necessary results regarding focal points along lightlike geodesics in Finsler spacetimes.  All results in this section have well-known Lorentzian analogues, and our treatment parallels the Lorentzian treatment as it is derived in \cite[Chapter~10]{o1983}.  Having said that, in the Finslerian setting some modifications are required, which we have been careful to write out explicitly.

Let $P \subset M$ be a spacelike submanifold and let $\sigma\colon[0,b] \lra M$ be a lightlike geodesic orthogonal to $P$ at $\sigma(0) \in P$, with endpoint $\sigma(b) = q$.  If $C_L(P,q)$ is the space of $L$-admissible piecewise smooth curves from $P$ to $q$, then we define $T_{\sigma}^{\perp}C_L(P,q)$ to be the vector space of all piecewise smooth vector fields $V$ along $\sigma$ satisfying $V(0) \in T_{\sigma(0)}P, V(b) = 0$, and such that $V(u)$ is $g_{\ds(u)}$-orthogonal to $\ds(u)$ at each $ u \in [0,b]$.  Next, given a smooth  $(P,q)$ variation  $\xx\colon [0,b] \times (-\delta,\delta) \lra M$ of $\sigma$ with variation field $V$, the second variation of energy with respect to $\xx$ is
$$
E_{\xx}''(0)\ =\ \int_0^b \Big[\ip{V'}{V'}{\ds} - \ip{R^{\sigma}(\ds,V)V}{\ds}{\ds}\Big]du\, -\, \ip{\ds(0)}{\ff{V(0)}{V(0)}}{\ds(0)}
$$
(for a proof, see, e.g., \cite[Corollary 3.8]{JaSo14}).  Observe that $S_{\dot\sigma(0)}^P$ is well defined by Lemma \ref{lemma:spacesub}.  From now on we will consider variations that include non-causal curves, since we assume that $L$ can be extended to an open subset $A^*$ that contains $\hat{A}$. Indeed, by taking a smaller interval of variation if necessary, we can assume that an arbitrary variation is contained in $A^*$ and that $L$ is smooth along the tangent vectors to the curves in the variation.  Finally, let $I^P_\sigma$ denote the corresponding index form of $E$, defined as
$$
I^P_{\sigma}(V,W)\ :=\ \int_0^b \Big[\ip{V'}{W'}{\ds} - \ip{R^{\sigma}(\ds,V)W}{\ds}{\ds}\Big]du\, -\, \ip{\ds(0)}{\ff{V(0)}{W(0)}}{\ds(0)}.
$$
With that said, we begin with a lemma (cf. \cite[Proposition 10.41,~p.~291]{o1983}).

\begin{lemma}
\label{lemma:Hessian}
Let $P$ be a spacelike submanifold of a Finsler spacetime $(M,L)$ and $\sigma\colon [0,b] \lra M$ a future-pointing lightlike geodesic from $\sigma(0) \in P$ to $\sigma(b) = q$ that is orthogonal to $P$.  If there are no focal points of $P$ along $\sigma$, then the index form $I^P_{\sigma}$ is negative semidefinite on $T_{\sigma}^{\perp}C_L(P,q)$.  If $I^P_{\sigma}(V,V) = 0$ for any $V \in T_{\sigma}^{\perp}C_L(P,q)$, then $V$ is tangent to $\sigma$.
\end{lemma}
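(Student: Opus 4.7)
The plan is to adapt the Lorentzian argument of \cite[Proposition~10.41]{o1983} by working in the quotient bundle along the null geodesic $\sigma$. Let $N(\sigma)$ denote the bundle whose fiber at $u\in[0,b]$ is the $g_{\ds(u)}$-orthogonal complement of $\ds(u)$ modulo the line $\R\ds(u)$. By part $(iii)$ of Proposition \ref{prop:ind}, $g_{\ds}$ descends to a negative definite bilinear form on $N(\sigma)$. A direct check (using $R^\sigma(\ds,\ds)=0$ together with $\ds(0)$ being normal to $P$) shows that $u\mapsto u\ds(u)$ is a $P$-Jacobi field orthogonal to $\ds$ whose class in $N(\sigma)$ is identically zero; hence the $(n-1)$-dimensional space of $P$-Jacobi fields orthogonal to $\ds$ projects onto an $(n-2)$-dimensional space of Jacobi sections of $N(\sigma)$.

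I would then select $P$-Jacobi fields $J_1,\ldots,J_{n-2}$ orthogonal to $\ds$ whose classes $[J_1(u)],\ldots,[J_{n-2}(u)]$ form a basis of $N(\sigma)_u$ for each $u\in(0,b]$. The absence of focal points is exactly what is needed: a nontrivial relation $\sum c_i J_i(u_0)=\lambda\ds(u_0)$ at some $u_0\in(0,b]$ would, after subtracting $(\lambda/u_0)\cdot u\ds$, produce a nonzero $P$-Jacobi field vanishing at $u_0$, contrary to hypothesis. Any $V\in T_\sigma^\perp C_L(P,q)$ may then be written uniquely as $V=\sum_i f_i J_i+h\ds$ on $(0,b]$, extended by continuity to $u=0$; note that $V(0)\in T_{\sigma(0)}P$ combined with $\ds(0)\notin T_{\sigma(0)}P$ forces $h(0)=0$.

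The computational heart is to recast the integrand as a manifestly sign-controlled expression. Along the geodesic $\sigma$, the almost metric compatibility \eqref{almostmetric} becomes strict because $D_\sigma^{\ds}\ds=0$ kills the Cartan correction, and the pair-swap symmetry $\ip{R^\sigma(X,Y)Z}{W}{\ds}=\ip{R^\sigma(Z,W)X}{Y}{\ds}$ is available along $\sigma$ by the considerations already invoked inside the proof of Lemma \ref{lem:ric} (via \cite[Proposition~3.1]{Ja13} and \cite[Lemma~1.2]{Ja14}). Setting $Y=\sum_i f_i J_i$, $A=\sum_i f_i' J_i$, and $B=\sum_i f_i J_i'$, the $\ds$-part of $V$ drops out of both $g_{\ds}(V',V')$ and $g_{\ds}(R^\sigma(\ds,V)V,\ds)$ thanks to the antisymmetry of $R^\sigma$ in its first two slots and to $g_{\ds}(\ds,\ds)=0$; one then verifies using Lemma \ref{P-Jacobi} and the Jacobi equation $J_i''=R^\sigma(\ds,J_i)\ds$ that
\[
\ip{V'}{V'}{\ds}-\ip{R^\sigma(\ds,V)V}{\ds}{\ds}\;=\;\ip{A}{A}{\ds}\;+\;\frac{d}{du}\ip{Y}{B}{\ds}.
\]
Integrating from $0$ to $b$, the boundary term at $u=b$ vanishes (since $V(b)=0$ makes $Y(b)\in\R\ds(b)$ and $g_{\ds}(\ds,B)=0$), while the term at $u=0$ cancels $-\ip{\ds(0)}{\ff{V(0)}{V(0)}}{\ds(0)}$ upon invoking the $P$-Jacobi initial condition ${\rm tan}^P_{\ds(0)}J_i'(0)=\tilde{S}^P_{\ds(0)}(J_i(0))$ together with \eqref{relationS}. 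Since $[A]$ is a section of $N(\sigma)$ on which the induced fundamental tensor is negative definite, this yields $I^P_\sigma(V,V)=\int_0^b\ip{A}{A}{\ds}\,du\leq 0$.

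For rigidity, $I^P_\sigma(V,V)=0$ forces $A$ to be a multiple of $\ds$ pointwise; the linear independence of $[J_1],\ldots,[J_{n-2}]$ in $N(\sigma)$ on $(0,b]$ then yields $f_i'\equiv 0$, so each $f_i$ is constant, and the endpoint condition $V(b)=0$ forces all $f_i=0$, giving $V=h\ds$ tangent to $\sigma$. The hard part will be establishing the pair-swap symmetry of $R^\sigma$ along $\sigma$ without obstructive Cartan corrections (the same delicacy addressed inside Lemma \ref{lem:ric}); a secondary subtlety is the clean cancellation of the $u=0$ boundary term against the second-fundamental-form contribution in $I^P_\sigma$, which relies on the relation \eqref{relationS} between $S^P_z$ and its normal counterpart $\tilde S^P_z$.
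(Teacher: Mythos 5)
Your argument is correct and is, at bottom, the same O'Neill-style computation the paper invokes (\cite[Proposition~10.41]{o1983}): the authors likewise choose a basis of $P$-Jacobi fields $g_{\ds}$-orthogonal to $\sigma$, expand $V$ in it, and reduce $I^P_\sigma(V,V)$ to $\int_0^b \ip{A}{A}{\ds}\,du$ with $A=\sum_i f_i'J_i$, using Lemma \ref{P-Jacobi} and the curvature simplifications along geodesics. The difference is one of packaging: the paper works with the full $(n-1)$-dimensional basis $\{Y_1,\dots,Y_{n-1}\}$ (which, up to a change of basis, contains the field $u\mapsto u\ds(u)$), whereas you split off the radical direction explicitly and argue in the quotient $N(\sigma)=\ds^\perp/\R\ds$. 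Your version buys a cleaner handling of the degenerate direction: negative definiteness of the induced form on $N(\sigma)$ makes both the semidefiniteness and the rigidity step ($[A]\equiv 0\Rightarrow f_i'\equiv 0\Rightarrow V=h\ds$) immediate, and in the codimension-two case relevant to Penrose's theorem your frame $\{J_1,\dots,J_{n-2},\ds\}$ can be taken nondegenerate down to $u=0$, so the coefficients are automatically piecewise smooth at the left endpoint. The one point you gloss over is precisely the one the paper elects to spell out: when $\dim P<n-2$, some of your $J_i$ must vanish at $u=0$, and ``extended by continuity'' then requires the factorization $J_i(u)=u\tilde J_i(u)$ with the $\tilde J_i$ completing a linearly independent frame at $u=0$, to control not only $f_i$ but also $f_i'$, which enters the integrand. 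With that supplied, and with the curvature symmetries you correctly identify as the delicate input (the same ones used in Lemma \ref{lem:ric}, via \cite[Proposition 3.1]{Ja13}, \cite[Lemma 1.2]{Ja14} and \cite[Lemma 3.10]{JaSo14}), your proof is complete and matches the paper's.
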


\begin{proof}
The proof can be carried out following the same lines as in \cite[Proposition 10.41]{o1983}.  Here we only detail the choice of the basis of $P$-Jacobi fields.  Let $\{Y_1,\dots,Y_{n-1}\}$ be a basis for the space of $P$-Jacobi fields along $\sigma$ that are $g_{\ds}$-orthogonal to $\sigma$ and such that $Y_1(0)=Y_2(0)=\cdots=Y_{n-1-r}(0)=0$, with $r=\dim P$.  Since there are no focal points along $\sigma$, it follows that for each $u \in (0,b]$, no nontrivial linear combination of the $Y_i(u)$'s can be zero, hence $\{Y_1(u),\dots,Y_{n-1}(u)\}$ is a basis for $\ds(u)^{\perp} = \{z \in T_{\sigma(u)}M : \ip{\ds(u)}{z}{\ds(u)} = 0\}$.  For any $V \in T_{\sigma}^{\perp}C_L(P,q)$, we can therefore write $V = \sum_{i=1}^{n-1} f_iY_i$ for some piecewise smooth functions $f_i$ on $(0,b]$, which functions can be extended continuously to 0, since  for $i=1,\ldots,n-r-1$, $Y_i(u)=u \tilde{Y}_i(u)$, with $\tilde{Y}_1(u),\ldots,\tilde{Y}_{n-r-1}(u),Y_{n-r}(u),\ldots,Y_{n-1}(u)$ linearly independent.  Having at hand the expression $V = \sum_{i=1}^{n-1} f_iY_i$ and taking Lemma \ref{P-Jacobi} into account, $\ip{Y_i}{{Y_j}'}{\ds} = \ip{{Y_i}'}{Y_j}{\ds}$, as well as bearing in mind that the curvature along a variation behaves in a simpler way along geodesics (see \cite[Lemma 3.10]{JaSo14}), the proof then goes though as in \cite[Proposition 10.41]{o1983}.
\end{proof}

Our next proposition is a direct analogue of \cite[Proposition~43,~p.~292]{o1983}.  Recall that $H^{P}_z$ is the mean curvature vector field of $P$ in the orthogonal direction $z$.
\begin{prop}
\label{prop:Ric}
Let $P$ be an $(n-2)$-dimensional spacelike submanifold in a Finsler spacetime $(M,L)$ of {\rm dim}\,$M \geq 3$.  Let $\sigma\colon [0,b] \lra M$ be a future-pointing lightlike geodesic that is $g_{\ds(0)}$-orthogonal to $P$ at $\sigma(0) \in P$.  If
\begin{enumerate}[(i)]
\vskip 4pt
\item $\ip{\ds(0)}{H^{P}_{\dot\sigma(0)}}{\ds(0)}\ =:\ k > 0,$
\vskip 4pt
\item ${\rm Ric} (\ds)\ \geq\ 0$,
\vskip 4pt
\end{enumerate}
then there is a focal point $\sigma(r)$ of $P$ along $\sigma$ with $0 < r \leq 1/k$, provided $\sigma$ is defined on this interval.
\end{prop}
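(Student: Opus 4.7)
My plan is to mirror the Lorentzian proof (\cite[Proposition~10.43]{o1983}) and argue by contradiction. Assume no $P$-focal point occurs on $(0,1/k]$, and set $q:=\sigma(1/k)$. By Lemma~\ref{lemma:Hessian}, $I^P_\sigma$ is then negative semidefinite on $T^\perp_\sigma C_L(P,q)$, so the strategy is to exhibit a family of fields in this space whose combined index-form value is $\geq 0$, with equality ruled out geometrically.

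Because $\dim P=n-2$ and $\ds(0)$ is $g_{\ds(0)}$-orthogonal to $P$, Proposition~\ref{prop:ind}(iii) together with Lemma~\ref{lemma:spacesub} gives that $g_{\ds(0)}$ is negative-definite on $T_{\sigma(0)}P$. Pick a $g_{\ds(0)}$-orthonormal basis $e_3,\dots,e_n$ of $T_{\sigma(0)}P$ and $\ds$-parallel transport it along $\sigma$ via $D^{\ds}_\sigma$ to obtain fields $E_3,\dots,E_n$. Since $\sigma$ is a geodesic, $D^{\ds}_\sigma\ds=0$, so the Cartan-tensor correction in~\eqref{almostmetric} vanishes, and $g_{\ds}(\ds,E_i)$ and $g_{\ds}(E_i,E_j)$ stay constant along $\sigma$; in particular each $E_i$ remains spacelike, orthonormal, and $g_{\ds}$-orthogonal to $\ds$. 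Set $f(u):=1-ku$ and $V_i:=fE_i$; these lie in $T^\perp_\sigma C_L(P,q)$.

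The key computation is to evaluate $\sum_{i=3}^n I^P_\sigma(V_i,V_i)$. The curvature term simplifies via~\eqref{varCurvRel} (using that $\sigma$ is a geodesic and the $E_i$ are parallel) to $\sum g_{\ds}(R_{\ds}(\ds,E_i)E_i,\ds)$, which by Lemma~\ref{lem:ric} equals $-{\rm Ric}(\ds)$. The integrated kinetic term and the boundary term --- the latter controlled by hypothesis~(i) via the definition of $H^P_{\ds(0)}$ as a trace --- are tailored, by the affine choice $f=1-ku$, to cancel exactly, leaving
\[
\sum_{i=3}^n I^P_\sigma(V_i,V_i)\ =\ \int_0^{1/k}(1-ku)^2\,{\rm Ric}(\ds)\,du\ \geq\ 0
\]
by hypothesis~(ii). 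Combined with the negative semidefiniteness of $I^P_\sigma$, this forces each $I^P_\sigma(V_i,V_i)=0$; then the second clause of Lemma~\ref{lemma:Hessian} requires each $V_i$ to be tangent to $\ds$, which is impossible since $V_i(0)=e_i\in T_{\sigma(0)}P$ is transverse to the $P$-normal vector $\ds(0)$. This contradiction yields the desired focal point.

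The hardest part will be the sign bookkeeping in the second variation. Because the fundamental tensor has index $n-1$, the spacelike complement of a lightlike $\ds$ is negative-definite, which flips signs throughout the index form relative to the standard $(-,+,\dots,+)$ Lorentzian treatment; one must verify that the cancellation between the integrated kinetic contribution and the boundary $k$-term comes out exactly right, which depends on interpreting the trace defining $H^P_{\ds(0)}$ with the proper normalization. A secondary technicality is the need to invoke~\eqref{almostmetric} together with $D^{\ds}_\sigma\ds=0$ to maintain the orthonormality and $\ds$-orthogonality of the parallel frame along $\sigma$, a step that is automatic in the Lorentzian setting but requires explicit justification here.
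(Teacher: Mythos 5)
Your proposal is correct and follows essentially the same route as the paper, which likewise takes a $g_{\ds(0)}$-orthonormal basis of $T_{\sigma(0)}P$, parallel translates it along $\sigma$ (noting that the geodesic condition kills the Cartan-tensor term in \eqref{almostmetric} so orthonormality and $g_{\ds}$-orthogonality are preserved), and then runs the O'Neill index-form argument with $V_i=(1-ku)E_i$, invoking Lemma~\ref{lem:ric} to identify the curvature sum with $-{\rm Ric}(\ds)$. The technical caveats you flag (sign conventions from the index-$(n-1)$ fundamental tensor, and the explicit justification of the parallel frame's properties) are exactly the modifications the paper singles out.
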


\begin{proof}
Let $e_3,\dots,e_n$ be a $g_{\ds(0)}$-orthonormal basis for $T_{\sigma(0)}P$ and $\sigma$-parallel translate them along $\sigma$ to obtain smooth vector fields $E_3,\dots, E_n \in \mathfrak{X}(\sigma)$.  Recall that as $\sigma$ is a geodesic, $E_3,\dots, E_n$ are $g_{\dot\sigma}$-orthonormal and $g_{\dot\sigma}(\dot\sigma,E_i) =0$ for $i=3,\ldots,n$.  Then the proof goes through as in \cite[Proposition~43,~p.~292]{o1983}, after taking into account Lemma \ref{lem:ric}.
\end{proof}

In the remainder of this section we establish conditions under which causal curves emanating from a spacelike submanifold will have timelike curves arbitrarily close to them.  To that end the following lemma is used repeatedly; cf. \cite[Lemma~45,~p.~293]{o1983}.  It is essentially a ``first derivative test" for determining the existence of timelike curves within a given variation.  Note that the curves dealt with in this lemma are smooth, not piecewise smooth.  Given a piecewise smooth variation $\xx\colon [0,b] \times (-\delta,\delta) \lra M$, $(u,v)\mapsto\xx(u,v)$, recall the notation that we established in Section \ref{section:curvature}: $\sigma_v=\xx(\cdot,v)$ and $\beta_u=\xx(u,\cdot)$ for $u\in[0,b]$ and $v\in(-\delta,\delta)$.

\begin{lemma}
\label{lemma:firstder}
Let $\sigma\colon [0,b] \lra M$ be a smooth future-pointing causal curve in a Finsler spacetime $(M,L)$ and $\xx\colon [0,b] \times (-\delta,\delta) \lra M$ a smooth variation of $\sigma$.  Assume that $L$ is smooth and has nondegenerate fundamental tensor on the velocities of the variational curves and let $V(u) =\dot\beta_u|_{v=0}$ be the variation field, $A(u) =D_{\beta_u}^{\dot\sigma_v}\dot\beta_u|_{v=0}$ the acceleration, and $f\colon [0,b] \times (-\delta,\delta) \lra \R$ the function given by $f(u,v)=L(\dot\sigma_v(u))$. Then
 \begin{align}
 \frac 12 \frac{\partial f}{\partial v}\bigg|_{v=0}&=\ g_{\ds}(V',\ds)\ =\ -g_{\ds}(V,D_{\sigma}^{\ds} \ds)\ +\ \frac{\partial}{\partial u} (g_{\ds}(V,\ds)),\label{firstvar}\\
 \frac 12 \frac{\partial^2 f}{\partial v^2}\bigg|_{v=0}&=\ g_{\ds}(A',\ds)\ -\ g_{\ds}(R^\sigma(\ds,V) V,\ds)\ +\ g_{\ds}(V',V').\label{secondvar}
 \end{align}
 Moreover, when $\sigma$ is a geodesic,
 \begin{equation}\label{secondvargeo}
 \frac 12 \frac{\partial^2 f}{\partial v^2}\bigg|_{v=0} =\ g_{\ds}(A',\ds)-g_{\ds}(V''-R^\sigma(\ds,V)\ds,V)\ +\ \frac{\partial}{\partial u}(g_{\ds}(V',V)).
 \end{equation}
 Finally, if $\ip{V'}{\ds}{\ds} > 0$ on $[0,b]$, then for sufficiently small $v \in (0,\delta)$ the smooth curves $\sigma_v\colon[0,b] \lra M$ will be future-pointing timelike.
\end{lemma}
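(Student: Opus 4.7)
The plan is to derive the final timelike conclusion from the first-variation identity \eqref{firstvar} by a compactness-plus-continuity argument, using Remark \ref{rem:Lnegative} as the bridge between positivity of the extended $L$ and actual membership in the cone $A$. The preliminary identities \eqref{firstvar}--\eqref{secondvargeo} I would establish by standard Chern-connection calculations along the lines of \cite[Section 3]{JaSo14}: differentiating $f$ once and using $dL_v(w) = 2g_v(v,w)$ from \eqref{eq:dL} together with the torsion-free exchange $D_{\beta_u}^{\dot\sigma_v}\dot\sigma_v\big|_{v=0} = V'$ gives \eqref{firstvar}; a second differentiation produces the curvature operator $R^{\sigma}$ through \eqref{eqn:Rsigma1}--\eqref{eqn:Rsigma2}, after which the almost-compatibility \eqref{almostmetric} together with the Cartan identity \eqref{propCartan} (which kills terms carrying $\ds$ in a Cartan slot) yields \eqref{secondvar}; setting $D_\sigma^{\ds}\ds = 0$ and reorganising by parts gives \eqref{secondvargeo}.

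For the final claim, I first set $h(u) := g_{\ds(u)}(V'(u),\ds(u))$, so that \eqref{firstvar} reads $\tfrac12\partial_v f(u,0) = h(u)$. The hypothesis gives $h > 0$ on the compact interval $[0,b]$, hence by continuity a uniform lower bound $h \geq c > 0$. Because $L$ is smooth with nondegenerate fundamental tensor on the variational velocities, $f$ is at least $C^{2}$, and by uniform continuity of $\partial_v f$ on a compact subrectangle I may shrink $\delta$ to some $\delta_0 \in (0,\delta)$ so that $\partial_v f(u,v) \geq c$ on $[0,b] \times [-\delta_0,\delta_0]$. The mean value theorem then yields
$$f(u,v)\ \geq\ f(u,0)\,+\,cv\ \geq\ cv\ >\ 0\qquad\text{for}\ \ 0 < v < \delta_0,$$
since $f(u,0) = L(\ds(u)) \geq 0$ as $\sigma$ is causal.

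It remains to upgrade the positivity $L(\dot\sigma_v(u)) > 0$ to the genuine cone membership $\dot\sigma_v(u) \in A_{\sigma_v(u)}$, which is what \emph{future-pointing timelike} requires. Since $\ds(u) \in \hat A \subset A^*$ on $[0,b]$ and $A^*$ is open, a further shrinking of $\delta_0$ ensures $\dot\sigma_v(u) \in A^*$ throughout; by Remark \ref{rem:Lnegative}, $L$ is negative on $A^* \setminus \hat A$ and zero on $\partial A$, so $L(\dot\sigma_v(u)) > 0$ forces $\dot\sigma_v(u) \in A_{\sigma_v(u)}$, finishing the argument.

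The main obstacle is modest but essential: a purely pointwise positivity of the first variation is not enough when $\sigma$ is lightlike (so $f(\cdot,0) \equiv 0$), which is why the \emph{uniform} lower bound $c$ extracted via compactness of $[0,b]$ is needed; and one must distinguish positivity of the extended $L$ from actual causality, which is exactly the role of Remark \ref{rem:Lnegative}.
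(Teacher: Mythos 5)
Your proposal is correct and follows essentially the same route as the paper's (very terse) proof, which also works in the extension $A^*$, invokes Remark~\ref{rem:Lnegative} so that $L<0$ off $\hat A$, and then defers the computation to O'Neill's Lorentzian argument together with the Chern-connection references. Your write-up usefully spells out the compactness/uniform-continuity step and the final upgrade from $L>0$ to cone membership, which the paper leaves implicit; the only nit is a factor of $2$ between $h=\tfrac12\partial_v f(\cdot,0)$ and the bound you assert on $\partial_v f$, which is harmless.
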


\begin{proof}
We can assume without loss of generality that the variation is contained in the open subset $A^*$ that contains $\hat{A}$ and that $L$ can be extended as a nondegenerate Lorentz-Finsler metric in such a way that $L$ is negative in $A^*\setminus \hat{A}$ (recall Remark \ref{rem:Lnegative}).  Then the computation follows along the same lines as \cite[Lemma~45,~p.~293]{o1983}, taking into account \cite[Proposition 3.2]{Ja13} and \cite[Lemma 3.10]{JaSo14}.
\end{proof}

As an immediate consequence of Lemma~\ref{lemma:firstder}, we have the following proposition.

\begin{prop}
\label{prop:notnormal}
Let $P$ be a spacelike submanifold of a Finsler spacetime $(M,L)$ and $\sigma\colon [0,b] \lra M$ a future-pointing lightlike geodesic from $\sigma(0) \in P$ to $\sigma(b) = q$.  If $\sigma$ is not orthogonal to $P$ at $\sigma(0)$, then there is a smooth future-pointing timelike curve from $P$ to $q$.
\end{prop}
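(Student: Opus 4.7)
The plan is to produce a smooth one-parameter family of curves joining $P$ to $q$, all close to $\sigma$, whose $v$-derivative at $v=0$ points ``into the timelike cone'' uniformly along $\sigma$; the last assertion of Lemma \ref{lemma:firstder} will then yield a smooth future-pointing timelike curve $\sigma_v\colon [0,b]\lra M$ from $P$ to $q$ for all sufficiently small $v>0$.

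Since $\sigma$ is not $g_{\dot\sigma(0)}$-orthogonal to $P$, I pick $v_0\in T_{\sigma(0)}P$ with $c:=g_{\dot\sigma(0)}(\dot\sigma(0),v_0)\neq 0$; after replacing $v_0$ by $-v_0$ if needed, I may assume $c<0$. Let $V_0$ be the $\sigma$-parallel vector field along $\sigma$ with $V_0(0)=v_0$, and set $V(u):=(1-u/b)\,V_0(u)$, so that $V(0)=v_0\in T_{\sigma(0)}P$ and $V(b)=0$. Because $\sigma$ is a geodesic and $V_0$ is $\sigma$-parallel, \eqref{almostmetric} gives $g_{\dot\sigma}(V_0,\dot\sigma)\equiv c$ along $\sigma$, and hence
\begin{equation*}
g_{\dot\sigma(u)}(V(u),\dot\sigma(u))\ =\ \bigl(1-\tfrac{u}{b}\bigr)c.
\end{equation*}
Substituting this into the rightmost expression in \eqref{firstvar}, and using $D_\sigma^{\dot\sigma}\dot\sigma=0$, yields $g_{\dot\sigma(u)}(V'(u),\dot\sigma(u))=-c/b>0$ for every $u\in[0,b]$.

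It remains to realize $V$ as the variation field of a smooth variation $\xx\colon [0,b]\times(-\delta,\delta)\lra M$ of $\sigma$ with $\xx(0,v)\in P$ and $\xx(b,v)=q$ for all $v$. I fix a smooth curve $\alpha\colon(-\delta,\delta)\lra P$ with $\alpha(0)=\sigma(0)$ and $\dot\alpha(0)=v_0$, an auxiliary Riemannian metric $h$ on $M$, and define
\begin{equation*}
\xx(u,v)\ :=\ \exp^h_{\sigma(u)}\!\bigl(v\,V(u)+v^2\,\eta(u,v)\bigr),
\end{equation*}
with $\eta$ smooth and arranged so that $\xx(0,v)=\alpha(v)$ and $\xx(b,v)=q$. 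The compatibility conditions $V(0)=\dot\alpha(0)$ and $V(b)=0$ ensure the existence of such $\eta$: at $u=0$ the prescribed value is $v^{-2}\bigl[(\exp^h_{\sigma(0)})^{-1}(\alpha(v))-v\,V(0)\bigr]$, whose bracket is $O(v^2)$ and so extends smoothly to $v=0$; at $u=b$ one can take $\eta(b,v)=0$; smoothly interpolating produces the required $\eta$. After shrinking $\delta$ if necessary, the velocities of all $\sigma_v=\xx(\cdot,v)$ lie in the open extension $A^*$ of $\hat{A}$ on which $L$ is smooth with nondegenerate fundamental tensor, so Lemma \ref{lemma:firstder} applies and produces a smooth future-pointing timelike curve $\sigma_v$ from $\alpha(v)\in P$ to $q$.

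The only genuinely delicate point is the variation construction in the previous paragraph; happily, since here only the variation field (not the acceleration) is prescribed, the obstruction to using the Finslerian exponential map flagged in the introduction does not intervene, and an auxiliary Riemannian exponential map suffices.
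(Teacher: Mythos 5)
Your proof is correct and follows essentially the same route as the paper, which simply invokes \cite[Proposition~50,~p.~298]{o1983} together with the hint of parallel-translating a vector $y\in T_{\sigma(0)}P$ non-orthogonal to $\dot\sigma(0)$ — exactly your $V_0$, cut off by the factor $(1-u/b)$ and fed into the last assertion of Lemma~\ref{lemma:firstder}. Your explicit construction of the $(P,q)$-variation via an auxiliary Riemannian exponential map is a sound way to fill in the detail the paper leaves implicit, and your observation that only the variation field (not the acceleration) needs to be prescribed here correctly explains why the obstruction addressed by Lemma~\ref{AVvar} does not arise.
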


\begin{proof}
This proof goes through as it appears in \cite[Proposition~50,~p.~298]{o1983}, simply by considering the $\sigma$-parallel translate of some $y \in T_{\sigma(0)}P$ along $\sigma$.
\end{proof}

\begin{prop}
\label{prop:causal1}
Let $(M,L)$ be a Finsler spacetime and $\sigma\colon [0,b] \lra M$ a piecewise smooth future-pointing causal curve that is not a smooth lightlike pregeodesic.  Then there is a piecewise smooth future-pointing timelike curve from $\sigma(0)$ to $\sigma(b)$ arbitrarily close to $\sigma$.
\end{prop}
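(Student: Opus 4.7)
The plan is to adapt the Lorentzian proof of \cite[Proposition~46,~p.~294]{o1983} to the Finslerian setting. Since $\sigma$ is piecewise smooth, future-pointing, and causal, but is not a smooth lightlike pregeodesic, at least one of the following must occur somewhere on $[0,b]$: (A) $\sigma$ has a break at some $u_i \in (0,b)$ with $\dot\sigma(u_i^-)$ and $\dot\sigma(u_i^+)$ not proportional; (B) $\sigma$ is smooth with a timelike tangent $\dot\sigma(u_0) \in A_{\sigma(u_0)}$ at some $u_0$; or (C) $\sigma$ is smooth and lightlike in a neighborhood of some $u_0$ with $D_\sigma^{\dot\sigma}\dot\sigma(u_0)$ not proportional to $\dot\sigma(u_0)$. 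In each case I would first construct a local timelike perturbation of $\sigma$ on a small interval around the ``bad'' point, and then glue these perturbations into a global timelike approximation using the openness of chronological sets.

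For case (A), the strict convexity of the indicatrix (Proposition~\ref{prop:ind}(i)) together with the salience of $\bar{A}$ forces every convex combination $\lambda\dot\sigma(u_i^-) + (1-\lambda)\dot\sigma(u_i^+)$, $\lambda \in (0,1)$, to lie in the interior $A_{\sigma(u_i)}$; a smooth corner-cutting curve realizing such convex combinations as its tangent vectors provides a strictly timelike replacement near $u_i$. For case (B), continuity of $L$ and openness of $A$ guarantee that $\dot\sigma$ is timelike on a whole neighborhood of $u_0$, and any sufficiently $C^1$-small perturbation preserves this property. For case (C) -- the main obstacle -- I would apply Lemma~\ref{lemma:firstder}: pick a $\sigma$-parallel vector field $W$ along $\sigma|_{[u_0-\epsilon,u_0+\epsilon]}$ with $g_{\dot\sigma}(W,\dot\sigma)(u_0)=0$ and $g_{\dot\sigma}(W, D_\sigma^{\dot\sigma}\dot\sigma)(u_0)<0$, which exists precisely because $D_\sigma^{\dot\sigma}\dot\sigma(u_0)$ is not proportional to $\dot\sigma(u_0)$; combined with a smooth bump function $f \geq 0$ supported in $[u_0-\epsilon,u_0+\epsilon]$ with $f(u_0)>0$, the variation field $V = fW$ satisfies, via formula \eqref{firstvar}, $g_{\dot\sigma}(V',\dot\sigma) > 0$ on a subinterval around $u_0$, whence Lemma~\ref{lemma:firstder} produces variational curves with timelike tangent for small parameter values.

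After such a local perturbation, the resulting piecewise smooth causal curve $\tilde\sigma$ contains a strictly timelike segment on some $[u_1,u_2]$, so $\tilde\sigma(u_1) \ll \tilde\sigma(u_2)$. To upgrade this to a fully timelike curve from $\sigma(0)$ to $\sigma(b)$ lying in a prescribed open neighborhood $\uu$ of $\sigma([0,b])$, I would shrink $\epsilon$ so that the perturbation stays inside $\uu$, and then exploit the openness of $I^{\pm}(\cdot,\uu)$ to tilt the remaining causal portions of $\tilde\sigma$: there is a point near $\tilde\sigma(u_1)$ belonging to $I^-(\tilde\sigma(u_2),\uu)$, and iterating the local construction (or concatenating short timelike curves within $\uu$) yields the required piecewise smooth timelike approximation. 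The central Finslerian subtlety in case (C) is that Lemma~\ref{lemma:firstder} requires $L$ to be smooth on the velocities of the variational curves; this is safeguarded by starting the variation from a lightlike curve, so that for small enough parameter the variational velocities remain near the lightcone, inside the extension $A^*$ of Definition~\ref{def:Finsler} where $L$ is smooth. The Cartan-type correction inherent in \eqref{almostmetric} does not spoil \eqref{firstvar}, since the relevant Cartan term carries $\dot\sigma$ as an argument and hence vanishes by \eqref{propCartan}.
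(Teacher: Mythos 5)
Your decomposition into cases (A)/(B)/(C) is a reasonable reorganization of the paper's argument, and cases (A) and (B) are handled well enough. The genuine problem is case (C), which is the heart of the proposition and precisely where the Finslerian difficulty lies.

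In case (C) you choose a $\sigma$-parallel $W$ with $g_{\dot\sigma}(W,\dot\sigma)(u_0)=0$ and $g_{\dot\sigma}(W,D_\sigma^{\dot\sigma}\dot\sigma)(u_0)<0$, set $V=fW$ for a bump $f$, and claim that \eqref{firstvar} gives $g_{\dot\sigma}(V',\dot\sigma)>0$ near $u_0$. This is false. Since $W$ is parallel, a direct computation gives $V'=f'W$ and hence $g_{\dot\sigma}(V',\dot\sigma)=f'\,g_{\dot\sigma}(W,\dot\sigma)$, which is \emph{zero} at $u_0$ by your very choice of $W$. Equivalently, in \eqref{firstvar} the boundary term $\tfrac{\partial}{\partial u}\bigl(g_{\dot\sigma}(V,\dot\sigma)\bigr)$ at $u=u_0$ equals $f(u_0)\,g_{\dot\sigma}(W,D_\sigma^{\dot\sigma}\dot\sigma)(u_0)$, which exactly cancels the term $-g_{\dot\sigma}(V,D_\sigma^{\dot\sigma}\dot\sigma)(u_0)$ you were relying on. With $g_{\dot\sigma}(V',\dot\sigma)$ only nonnegative and vanishing at $u_0$, the first-order test in Lemma~\ref{lemma:firstder} gives you no control on the sign of $L(\dot\sigma_v(u_0))$, so the varied curves need not be causal at $u_0$. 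The fix (which the paper uses) is to take $W$ \emph{timelike} and parallel, so $g_{\dot\sigma}(W,\dot\sigma)>0$ along the whole subinterval by the reverse fundamental inequality (Proposition~\ref{Lorentznorm}(ii)), and to enlarge the variation field to $V=fW+h\,D_\sigma^{\dot\sigma}\dot\sigma$; then the negativity of $g_{\dot\sigma}(D_\sigma^{\dot\sigma}\dot\sigma,D_\sigma^{\dot\sigma}\dot\sigma)$ on the bad subinterval (which you do correctly identify via the argument that $D_\sigma^{\dot\sigma}\dot\sigma$ is $g_{\dot\sigma}$-orthogonal to the lightlike $\dot\sigma$ but not proportional to it) contributes with the right sign, and $f,h$ can be chosen so that $g_{\dot\sigma}(V',\dot\sigma)=g_{\dot\sigma}(W,\dot\sigma)>0$ while $V$ vanishes at the endpoints.

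The gluing step is also underdeveloped. Once you have a causal curve with a timelike piece, you assert that openness of $I^\pm(\cdot,\mathcal{U})$ lets you "tilt" the remaining lightlike portions into a globally timelike curve in $\mathcal{U}$. In the Lorentzian case this is done via convex neighborhoods, but, as noted in the paper, convex neighborhoods are not generally available for Finsler spacetimes that are non-smooth along timelike directions. The paper instead runs an explicit induction on the subintervals where $\dot\sigma$ is lightlike, repeatedly applying the first-derivative test (with a choice of $W$ that has $g_{\dot\sigma}(W,\dot\sigma)>0$ throughout) and ruling out, by a convergence argument, the possibility that this process accumulates before reaching $\sigma(0)$ or $\sigma(b)$. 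Your case (A) treatment of breaks via convexity of $\bar A$ is fine in spirit, but it too must eventually be folded into this global concatenation, and that step is where the real work is.

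Finally, a small reference slip: the claim that the open segment joining two non-proportional lightlike vectors lies in $A_p$ follows from Proposition~\ref{prop:ind}(iv)--(v) (convexity of $\mathcal{C}_p$ and the characterization of its flat directions), not from the strong convexity of the indicatrix in part (i), which concerns timelike directions.
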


\begin{proof}
Let us assume first that $\sigma$ is piecewise smooth with $\ds(s_0) \in A_{\sigma(s_0)}$ for some $s_0\in [0,b]$.  Then we can assume without loss of generality that $s_0\in (0,b)$ and that $\sigma$ is smooth in some interval of $s_0$.  Let $d$ be the largest number such that $d<s_0$ and one of the one-side derivatives $\dot\sigma^\pm(d)$ is lightlike.  Choose an interval $[c',d'] \subset [0,s_0)$ which contains $d$ and  such that $\sigma$ belongs to the region where the fundamental tensor is well-defined and nondegenerate.  Let $W$ be a vector field along $\sigma|_{[c',d']}$ such that $g_{\ds}(W,\ds)>0$ and $f$ any smooth function on $[c',d']$ that vanishes at $c'$.
Define $V := fW$ and let $\xx\colon[c',d'] \times (-\delta,\delta) \lra M$ be any  variation of $\sigma$ which fixes the first point and has variation field $V$.  Now choose $f$ such that
$$
\ip{V'}{\ds}{\ds}\Big|_{[c',d']} =\ \dot{f}\ip{W}{\ds}{\ds}\ +\ f g_{\ds}(W',\ds)\Big|_{[c',d']} >\ 0, 
$$
which is always possible because $\ip{W}{\ds}{\ds}>0$ and we can apply the results of ODE theory. Extend $V$ to a vector field in $[c',s_0]$ in such a way that it is zero at $s_0$.  Then by Lemma~\ref{lemma:firstder}, for $v > 0$ sufficiently small the curves $\sigma_v$ are timelike \emph{on the interval $[c',d']$}. Observe that we can assume $\dot\sigma^+(d')$ is timelike by taking a bigger $d'$ if necessary.  Then on the remaining interval $[d',s_0]$ the curve $\sigma$ is itself timelike, so taking $v$ closer to 0 if necessary, the curves $\sigma_v$ will remain timelike on $[d',s_0]$ as well.  Proceeding by induction, note that it is always possible to obtain a curve from $\sigma(0)$ to $\sigma(s_0)$. Otherwise, there would exist a sequence $\{s_n\}\subset [0,s_0]$ which converges to $\bar{s}_0\in [0,s_0]$ such that all $\sigma(s_n)$ are chronologically connected to $\sigma(s_0)$ and such that no point on the curve $\sigma|_{[0,\bar{s}_0]}$ is chronologically connected to $\sigma(s_0)$.  In such a case, at least one of the derivatives $\dot\sigma({\bar s}_0)^\pm $ has to be lightlike.  By a similar process analyzing the possible cases,  we can obtain a  future-pointing timelike curve from $\sigma(\bar{s}_0)$ to $\sigma(s_0)$,  a contradiction.  To get a timelike curve from $\sigma(s_0)$ to $\sigma(b)$ we can proceed in an analogous way.   

 Assume now that $\sigma$ is a future-pointing smooth lightlike curve everywhere, but not a pregeodesic.  Then we proceed as follows.  Being future-pointing lightlike means that each $\ds(u) \in \hat{A}_{\sigma(u)}\setminus A_{\sigma(u)}$, hence
$$
\frac{d}{du}\underbrace{\ip{\ds}{\ds}{\ds}}_{0}\ =\ 2\ip{D_{\sigma}^{\ds}\ds}{\ds}{\ds}\ +\ \underbrace{2C_{\ds}(D_{\sigma}^{\ds}\ds,\ds,\ds)}_{0},
$$
so that each $D_{\sigma}^{\ds}\ds(u)$ is $g_{\ds(u)}$-orthogonal to $\ds(u)$.  Now consider the smooth function $u \mapsto \ip{D_{\sigma}^{\ds}\ds(u)}{D_{\sigma}^{\ds}\ds(u)}{\ds(u)}$.  This function can never be positive, because a $-g_{\ds(u)}$-timelike vector can never be $g_{\ds(u)}$-orthogonal to the $-g_{\ds(u)}$-lightlike vector $\ds(u)$.  Hence each $\ip{D_{\sigma}^{\ds}\ds(u)}{D_{\sigma}^{\ds}\ds(u)}{\ds(u)} \leq 0$.  If this function were identically zero, then $D_{\sigma}^{\ds}\ds$, like $\ds$, would be $-g_{\ds}$-lightlike at each $u \in [0,b]$, hence the two $g_{\ds}$-orthogonal vectors $\ds$ and $D_{\sigma}^{\ds}\ds$ would have to be collinear on $[0,b]$\,---\,but this would force $\sigma$ to be a lightlike pregeodesic in the Finsler spacetime $(M,L)$ (for example, adapt the proof in \cite[p.~95]{o1983} using \cite[Remark 4.3]{JaSo14}), contrary to our assumptions.  Hence the function $\ip{D_{\sigma}^{\ds}\ds(u)}{D_{\sigma}^{\ds}\ds(u)}{\ds(u)} \leq 0$ is not identically zero.  Consider an interval $[\bar a,\bar b]\subseteq [0,b]$ in which $\ip{D_{\sigma}^{\ds}\ds(u)}{D_{\sigma}^{\ds}\ds(u)}{\ds(u)} < 0$ and    pick a vector $W(\bar b) \in A_{\sigma(\bar b)}$  where $L$ is smooth with non-degenerate fundamental tensor,  which necessarily satisfies $\ip{W(\bar b)}{\ds(\bar b)}{\ds(\bar b)} > 0$ by part $(ii)$ of Proposition \ref{Lorentznorm}.  Let $W$ be its parallel translate along $\sigma$, so that $D_{\sigma}^WW = 0$  (as $W$ could enter the region where $L$ is not smooth or with degenerate fundamental tensor, we can take a smaller interval $[\bar a',\bar b]\subseteq [\bar a,\bar b]$ if necessary);  then $L(W)$ is constant along $\sigma|_{[\bar a,\bar b]}$, so that $W(u) \in A_{\sigma(u)}$ for all $u \in  [\bar a,\bar b]$ and hence $\ip{W}{\ds}{\ds} > 0$ on  $[\bar a,\bar b]$.  With a view to using Lemma~\ref{lemma:firstder}, we will now define smooth functions $f,h$ on $[\bar a,\bar b]$ such that the vector field $V := fW + hD_{\sigma}^{\ds}\ds$ satisfies $\ip{V'}{\ds}{\ds} > 0$ along $\sigma|_{[\bar a,\bar b]}$.  To that end, begin by noting that differentiation of $\ip{D_{\sigma}^{\ds}{\ds}}{\ds}{\ds} \equiv 0$ yields $\ip{D_{\sigma}^{\ds}D_{\sigma}^{\ds}\ds}{\ds}{\ds} = -\ip{D_{\sigma}^{\ds}\ds}{D_{\sigma}^{\ds}\ds}{\ds}$, which in turn implies that
$$
\ip{V'}{\ds}{\ds}\ =\ f'\ip{W}{\ds}{\ds}\ +\ f g_{\ds}(W',\ds)\ -\ h \ip{D_{\sigma}^{\ds}\ds}{D_{\sigma}^{\ds}\ds}{\ds},
$$
where we point out that $h'\ip{D_{\sigma}^{\ds}\ds}{\ds}{\ds} = 0$.  Define $u\mapsto q(u):=g_{\ds}(W',\ds)/g_{\ds}(W,\ds)$. Because  $\ip{D_{\sigma}^{\ds}\ds}{D_{\sigma}^{\ds}\ds}{\ds} < 0$ in $[\bar a,\bar b]$,  we can define $h$ to be any smooth function on $[\bar a,\bar b]$ vanishing at the endpoints and satisfying $\int_{\bar a}^{\bar b} h r e^{\int_{\bar a}^{u} q(t)\,dt} \,du = -\int_{\bar a}^{\bar b}e^{\int_{\bar a}^{u} q(t)\,dt}\,du.$  Next, define $f$ to be the smooth function 
\[u\ \mapsto\ f(u)\ :=\ \frac{\int_{\bar a}^u (hr+1)e^{\int_{\bar a}^{v} q(t)\,dt}\,dv}{e^{\int_{\bar a}^u q(t)\,dt}}\cdot\]
  Like $h$, $f$ vanishes at the endpoints, hence so does $V$.  Finally, a straightforward computation shows that
\beqa
\ip{V'}{\ds}{\ds}\ =\ \ip{W}{\ds}{\ds}\ >\ 0.\nonumber
\eeqa
Consider now any variation of $\sigma|_{[\bar a,\bar b]}$ with variation field $V$.  Then Lemma~\ref{lemma:firstder} applies to give a future-pointing timelike curve with endpoints the same as $\sigma|_{[\bar a,\bar b]}$.  When concatenated with $\sigma|_{[0,\bar a]}$ and $\sigma|_{[\bar b,b]}$, this gives a future-pointing causal curve from $\sigma(0)$ to $\sigma(b)$ that is timelike at some point, hence the first part of the proof applies. Finally, if $\sigma$ is a piecewise lightlike geodesic, the last part of the proof in \cite[Proposition~10.46,~p.~295]{o1983} adapts easily. 
\end{proof}

Observe that when $L$ is  defined on the whole tangent bundle $TM$ and smooth on $TM\setminus 0$ with nondegenerate  fundamental tensor, then convex neighborhoods are available \cite{whitehead32,whitehead33}, in which case Proposition \ref{prop:causal1} is easily obtained using  a generalization of \cite[Lemma 5.33]{o1983}.  This was first observed in \cite[Lemma 2]{Minguzopoli}.  An advantage of the proof given here in Proposition \ref{prop:causal1}, apart from its validity in the conic non-smooth case, is that  it allows one to find a timelike curve in the same causal homotopy as the original one. Observe that in some situations there can be infinite causal homotopy classes \cite{MoSa15}.  An immediate corollary to Proposition \ref{prop:causal1}, needed in Section~\ref{section:Penrose} below, is the following one (well known in the Lorentzian case, whose proof is identical).

\begin{cor}
\label{cor:IJ}
Let $(M,L)$ be a Finsler spacetime and $P \subset M$ any subset.  Then
$$
\chrf{P}\ =\ \chrf{\chrf{P}}\ =\ \chrf{\cf{P}}\ =\ \cf{\chrf{P}}\ \subset\ \cf{\cf{P}}\ =\ \cf{P}.
$$
Furthermore, ${\rm int}\,\cf{P} = \chrf{P}$.
\end{cor}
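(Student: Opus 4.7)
The approach is to derive the entire chain from a single principle: Proposition \ref{prop:causal1} asserts that a piecewise smooth future-pointing causal curve which is not a smooth lightlike pregeodesic can be deformed into a future-pointing timelike curve joining the same endpoints. In particular, whenever a future-pointing causal trip is concatenated with a future-pointing timelike trip, the composite has at least one timelike tangent, hence is not a smooth lightlike pregeodesic, and therefore the two endpoints are chronologically related.

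For the chain of equalities, the easy parts $\cf{\cf{P}} = \cf{P}$, $\chrf{\chrf{P}} \subset \chrf{P}$, and $\chrf{P} \subset \cf{P}$ follow by simple concatenation and Definition \ref{defi1}, while $\chrf{P} \subset \chrf{\chrf{P}}$ is obtained by splitting any witnessing timelike curve from $p \in P$ to $q$ at an interior point $p'$, which lies in $\chrf{P}$ and still satisfies $p' \ll q$. For the two mixing identities $\chrf{\cf{P}} = \chrf{P}$ and $\cf{\chrf{P}} = \chrf{P}$, the inclusions $\supset$ come from $P \subset \cf{P}$ and from $q \in \cf{q}$, respectively. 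The nontrivial inclusions $\subset$ are the content: given $p \in P$ with either $p \leq r \ll q$ or $p \ll r \leq q$, concatenating the two segments produces a future-pointing causal curve from $p$ to $q$ with at least one timelike tangent, and Proposition \ref{prop:causal1} then supplies a timelike curve from $p$ to $q$, so $q \in \chrf{P}$.

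The final identity ${\rm int}\,\cf{P} = \chrf{P}$ is split in the same spirit. The inclusion $\chrf{P} \subset {\rm int}\,\cf{P}$ uses only that $\chrf{P}$ is open (cf. \cite[Proposition 3.7]{JaSa11}) and contained in $\cf{P}$. Conversely, given $q \in {\rm int}\,\cf{P}$, I apply Proposition \ref{lemma:Lorentz} to select a smooth globally timelike vector field $\tau$ on $M$; letting $\gamma$ be its integral curve with $\gamma(0) = q$, I choose $\epsilon > 0$ small enough that $q' := \gamma(-\epsilon)$ still lies in ${\rm int}\,\cf{P} \subset \cf{P}$. The restriction $\gamma|_{[-\epsilon,0]}$ is future-pointing timelike, so $q' \ll q$, and the mixing identity from the previous step then forces $q \in \chrf{\cf{P}} = \chrf{P}$.

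The only genuine point of care is to ensure that in each concatenation a timelike tangent is present, so that the hypothesis of Proposition \ref{prop:causal1} (non-lightlike-pregeodesic) is verified. Beyond this bookkeeping, the argument uses no input other than Propositions \ref{lemma:Lorentz} and \ref{prop:causal1} together with the openness of chronological sets.
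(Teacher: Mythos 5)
Your proof is correct and follows essentially the same route the paper has in mind; the paper's own ``proof'' is only a citation to O'Neill, and O'Neill's argument is exactly what you reconstruct, namely the penetrating property ($p \leq r \ll q$ or $p \ll r \leq q$ implies $p \ll q$) derived from Proposition \ref{prop:causal1}, together with openness of $I^{+}$ and a timelike vector field from Proposition \ref{lemma:Lorentz} to step backward inside $\mathrm{int}\, J^{+}(P)$. The only point worth flagging is that your $\supset$ inclusions $I^{+}(P) \subset I^{+}(J^{+}(P))$, $I^{+}(P) \subset J^{+}(I^{+}(P))$, and $J^{+}(P) \subset J^{+}(J^{+}(P))$ rest on the convention that $\leq$ is reflexive; if one does not want to assume $p \leq p$, the same inclusions still follow by cutting the witnessing curve at an interior point (exactly as you already do for $I^{+}(P) \subset I^{+}(I^{+}(P))$), so this is not a gap but a bookkeeping choice worth making explicit.
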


\begin{proof}
See \cite[p.~402]{o1983} and \cite[Lemma~6,~p.~404]{o1983}.
\end{proof}

Moving on, we now generalize Proposition~\ref{prop:causal1} to the case when the first endpoint is a spacelike submanifold. But first we need a technical lemma.

\begin{lemma}\label{AVvar}
If $\sigma\colon[0,b]\lra M$ is a geodesic of a Finsler spacetime $(M,L)$ and $V$ and $A$ are vector fields along $\sigma$ satisfying $V(a)=V(b)=0=A(a)=A(b)$, then there is a variation $\xx$ of $\sigma$ with variation field $V=\dot\beta_u|_{v=0}$ and acceleration $A=D_{\beta_u}^{\dot\sigma_v}\dot\beta_u|_{v=0}$.  Moreover, $\xx$ fixes the first and last points.
\end{lemma}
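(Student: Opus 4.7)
The plan is to bypass the failure of the Finslerian exponential map to realize arbitrary variation and acceleration fields by transferring the construction to a Lorentzian one via an auxiliary metric attached to a geodesic extension of $\dot\sigma$.

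First I would extend $\dot\sigma$ to a smooth geodesic vector field $\tilde V$ on an open neighborhood $\mathcal U$ of $\sigma([0,b])$, with values in $A$ (or, near the lightlike portion, in the extended conic open set $A^*$ of Remark \ref{rem:Lnegative}). This is standard: prescribe any smooth admissible extension of $\dot\sigma$ along a hypersurface transverse to $\sigma$, then flow by the $L$-geodesic spray, shrinking $\mathcal U$ as needed. Set $\tilde g := g_{\tilde V}$, a smooth Lorentzian metric on $\mathcal U$. Because $\tilde V$ is a geodesic vector field, the Levi-Civita connection of $\tilde g$ can be identified with the Chern connection with reference vector $\tilde V$, as recalled in the introduction and developed in \cite{Ja13, JaSo14}; this is precisely the advertised reduction of the Finslerian picture to a Lorentzian one along a geodesic field.

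Next I would define the candidate variation by the exponential map of $\tilde g$:
\[
\xx(u,v)\ :=\ \exp^{\tilde g}_{\sigma(u)}\!\Bigl(v\,V(u)\, +\, \tfrac{v^2}{2}\,A(u)\Bigr),
\]
on $[0,b] \times (-\delta,\delta)$, with $\delta > 0$ small enough (by compactness of $[0,b]$) that the image lies in $\mathcal U$. The hypotheses $V(0) = V(b) = A(0) = A(b) = 0$ give $\xx(0,v) = \sigma(0)$ and $\xx(b,v) = \sigma(b)$, so $\xx$ fixes the first and last points, while $\xx(u,0) = \sigma(u)$ exhibits $\xx$ as a variation of $\sigma$.

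To verify the two defining properties, I would work in $\tilde g$-normal coordinates centered at $\sigma(u)$: there $\exp^{\tilde g}_{\sigma(u)}$ is the identity map, so $\beta_u(v)$ is represented by the polynomial curve $v\,V(u) + \tfrac{v^2}{2}A(u)$, giving $\dot\beta_u(0) = V(u)$ at once. Since all $\nabla^{\tilde g}$-Christoffel symbols vanish at the origin of normal coordinates, the Levi-Civita acceleration $\nabla^{\tilde g}_{\dot\beta_u}\dot\beta_u|_{v=0}$ reduces to the coordinate second derivative, which is $A(u)$. Finally, the Chern covariant derivative $D^{W}_{\beta_u}\dot\beta_u$ at $v=0$ depends on the reference vector field $W$ only through its value at $\beta_u(0) = \sigma(u)$, and at that point $\dot\sigma_0(u) = \dot\sigma(u) = \tilde V(\sigma(u))$; applying the identification of the Chern connection with $\nabla^{\tilde g}$ at $\tilde V$ then yields $D^{\dot\sigma_v}_{\beta_u}\dot\beta_u|_{v=0} = \nabla^{\tilde g}_{\dot\beta_u}\dot\beta_u|_{v=0} = A(u)$, as required.

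The main obstacle is the identification in the second step: one must be sure that the Cartan-tensor correction between the Chern connection with reference $\tilde V$ and the Levi-Civita connection of $\tilde g$ really does vanish at vectors lying on $\tilde V$ itself, so that the Lorentzian computation transfers exactly to the Finslerian one at $v=0$. This is the content of the reduction for which $\tilde V$ is assumed to be a geodesic field; once it is granted, the rest of the proof is the familiar exponential-map construction of a variation with prescribed first and second $v$-derivatives.
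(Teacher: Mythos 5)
Your overall strategy---replacing the Finslerian exponential map by the exponential map of the Lorentzian metric $g_{\tilde V}$ attached to a geodesic extension of $\dot\sigma$, and then reading off the variation and acceleration fields in normal coordinates---is exactly the mechanism of the paper's proof, and the identification of the Chern covariant derivative at $v=0$ with the Levi--Civita one of $g_{\tilde V}$ (via \cite[Lemma 7.4.1]{Sh01}, since only the value of the reference vector at $v=0$ enters) is correctly handled. The gap is in your very first step: you cannot, in general, extend $\dot\sigma$ to a geodesic vector field $\tilde V$ on a neighborhood of the \emph{entire} image $\sigma([0,b])$. The construction you invoke (extend $\dot\sigma(t_0)$ along a transverse hypersurface and flow by the geodesic spray) only yields such a field \emph{locally}, on a neighborhood of a point of the curve; over the full parameter range the geodesics issuing from the hypersurface may focus, so the flowed field becomes multivalued, and a self-intersection of $\sigma$ already makes a single vector field tangent to $\sigma$ near its image impossible. ``Shrinking $\mathcal U$'' only helps transversally, not along $\sigma$. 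This is not a corner case one may wave away: the lemma is applied in Proposition \ref{prop:causal2} precisely to geodesics carrying a focal point, i.e.\ in the regime where focusing occurs.

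The paper's proof addresses exactly this obstruction. It partitions $[a,b]$ into subintervals $[t_i,t_{i+1}]$ on each of which a local geodesic extension $W_i$ of $\dot\sigma$ exists, forms the Lorentzian metrics $h_i=g_{W_i}$ with exponential maps $\exp_i$, and builds a variation $\xx_i$ on each piece. The nontrivial content is then the gluing: the variation on $[t_i,t_{i+1}]$ must agree at $u=t_i$ with the one already built on $[t_{i-1},t_i]$, which is arranged inductively by feeding the transverse curve $\alpha_i(v)=\xx_{i-1}(t_i,v)$ (with its velocity $V(t_i)$ and acceleration $A(t_i)$) into the construction of $\xx_i$, via an interpolating term $z_i(u,v)$ that decays to zero at $u=t_{i+1}$ before correcting with the prescribed $V$ and $A$. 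To repair your proof you would either have to justify a global geodesic extension (which fails in general) or reproduce this piecewise construction together with the matching argument at the partition points.
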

\begin{proof}
First observe that at every instant $s_0\in[a,b]$ there exists an open subset $U_{s_0}$ that admits a geodesic vector field tangent to $\sigma$ in $U_{s_0}$.  (For example, consider a hypersurface $H$ transverse to $\sigma$ and containing $\sigma(t_0)$, and then extend $\dot\sigma(t_0)$ to a transverse vector field $W$ in $H$ (shrinking $H$ if necessary). Then the geodesics departing from $H$ with velocities $W$ give the geodesic vector field in a neighborhood of $\sigma(t_0)$.)  Moreover, by the compactness of $[a,b]$ we can choose a partition $t_0=a<t_1<\cdots<t_k<t_{k+1}=b$ such that $\sigma|_{[t_i,t_{i+1}]}$ is contained in an open subset $U_i$ that admits a geodesic vector field $W_i$ tangent to $\sigma$ for $i=0,\dots,k$. Define $h_i=g_{W_i}$, which is a Lorentzian metric in $U_i$, and let ${\exp_i}$ be the exponential map associated to $h_i$. Observe that if $\alpha_i\colon(-\delta_i,\delta_i)\lra U_i$ is a curve such that $\alpha_i(0)=\sigma(t_i)$, $\dot{\alpha}_i(0)=V(t_i)$, and $D_{\alpha_i}^{W_i}\dot{\alpha}_i=A(t_i)$, then by making $\delta_i$ smaller if necessary we can assume that the image of $\alpha_i$ is embedded in $U_i$ and  contained in an open subset wherein $({\exp_i})_{\sigma(t_i)}$ is a diffeomorphism. Then if $\tilde{z}_i\colon [t_i,t_{i+1}]\times (-\delta_i,\delta_i)\lra T_{\sigma(t_i)}M$ is a function given by $\tilde{z}_i(u,v)= \frac{t_{i+1}-u}{t_{i+1}-t_i}(\exp_i)_{\sigma(t_i)}^{-1}(\alpha_i(v))$ and $z_i(u,v)=T_u(\tilde{z}_i(u,v))$ for every $(u,v)\in  [t_i,t_{i+1}]\times (-\delta_i,\delta_i)$, where $T_u:T_{\sigma(t_i)}M\rightarrow T_{\sigma(u)}M$ is any parallel transport along $\sigma$, define a variation $\xx_i\colon[t_i,t_{i+1}]\times (-\delta_i,\delta_i)\lra M$ by
\beqa
&&\xx_i(u,v)\ =\nonumber\\
&&(\exp_i)_{\sigma(u)}\left[\Big(z_i(u,v)+v\big(V(u)-\frac{\partial z_i}{\partial v}(u,0)\big)\Big)+\frac{1}{2}v^2\Big(A(u)-\frac{\partial^2 z_i}{\partial v^2}(u,0)\Big)\right].\nonumber
\eeqa
It is easy to see that the variation field of $\xx_i$ is $V$ and its acceleration $A$ (observe that the exponential map at 0 preserves velocities and accelerations up to canonical isomorphism, and if $\nabla^i$ is the Levi-Civita connection of $h_i$, then $\nabla^i_{W_i}=\nabla^{W_i}_{W_i}$ because $W_i$ is a geodesic vector field; see \cite[Lemma 7.4.1]{Sh01}).
Then we can begin by choosing $\alpha_0$ constant, thereby obtaining $\xx_0$ (which fixes the first point) and $\alpha_1(v)=\xx_0(t_1,v)$. Proceeding inductively with the choice $\alpha_i(v)=\xx_i(t_i,v)$, we get a sequence of variations that match continuously, thereby giving a variation that fixes the first and the last point.
\end{proof}
\begin{prop}
\label{prop:causal2}
Let $P$ be a spacelike submanifold of a Finsler spacetime $(M,L)$ and $\sigma\colon [0,b] \lra M$ a future-pointing lightlike geodesic from $\sigma(0) \in P$ to $\sigma(b) = q$ that is orthogonal to $P$.  If there is a focal point of $P$ along $\sigma$ strictly before $q$, then there is a piecewise smooth future-pointing timelike curve from $\sigma(0)$ to $q$ arbitrarily close to $\sigma$.
\end{prop}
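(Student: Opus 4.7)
The plan is to adapt the standard Lorentzian argument (cf.~\cite[Ch.~10]{o1983}) to the Finslerian setting: exploit the focal point to construct a piecewise smooth $(P,q)$-variation $\xx$ of $\sigma$ whose variation curves $\sigma_v$ have strictly positive energy and carry a timelike tangent vector somewhere, and then invoke Proposition~\ref{prop:causal1} to obtain a nearby timelike curve.

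First, I would pick a nonzero $P$-Jacobi field $J$ along $\sigma|_{[0,r]}$ with $J(0) \in T_{\sigma(0)}P$ and $J(r)=0$, where $r \in (0,b)$ is the focal point provided by the hypothesis. Lemma~\ref{Jacobi-P2} gives that $J$ is $g_{\ds}$-orthogonal to $\sigma$ on $[0,r]$ and that $J'(r)$ is not tangent to $\sigma$. Extending $J$ by zero to $[r,b]$ produces a piecewise smooth $V \in T_\sigma^{\perp}C_L(P,q)$, and using the Jacobi equation $J'' = R^\sigma(\ds, J)\ds$, the $P$-Jacobi boundary condition at $u=0$ (combined with \eqref{relationS}), and integration by parts via the almost-metric identity \eqref{almostmetric} applied with reference vector $\ds$ (whose Cartan correction vanishes since $\sigma$ is a geodesic), one verifies $I^P_\sigma(V,V) = 0$.

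To make the index form strictly positive I would choose a smooth $Z \in T_\sigma^\perp C_L(P,q)$ with $Z(0)=Z(b)=0$ and $g_{\ds(r)}(Z(r), J'(r^-)) \neq 0$; such a $Z$ exists because, by Proposition~\ref{prop:ind}(iii), $g_{\ds(r)}|_{\ds(r)^{\perp}}$ is degenerate only along $\ds(r)$, whereas $J'(r^-)$ is transverse to $\ds(r)$. A parallel integration by parts on $I^P_\sigma(V,Z)$, combined with the geodesic curvature symmetry $g_{\ds}(R^{\sigma}(\ds,J)\ds, Z) = -g_{\ds}(R^{\sigma}(\ds, J)Z, \ds)$ (available from the Chern curvature identities of Section~\ref{section:curvature}), collapses the interior contributions and leaves only the jump at $r$, yielding $I^P_\sigma(V,Z) = g_{\ds(r)}(J'(r^-),Z(r)) \neq 0$. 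Hence for small $\epsilon$ of appropriate sign, $\tilde V := V + \epsilon Z$ satisfies $I^P_\sigma(\tilde V, \tilde V) > 0$.

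Finally, using a variant of the construction in Lemma~\ref{AVvar} — where the initial endpoint is allowed to move in $P$ with prescribed initial velocity $\tilde V(0)$ and freely chosen initial acceleration — I would realize $\tilde V$ as the variation field of a piecewise smooth $(P,q)$-variation $\xx$ with an acceleration field $A$ still at my disposal. Since $\sigma$ is a $P$-normal lightlike geodesic, $E_\xx(0)=0=E_\xx'(0)$, and $E_\xx''(0) = I^P_\sigma(\tilde V,\tilde V) > 0$. Applying Lemma~\ref{lemma:firstder} pointwise to $f(u,v) = L(\dot\sigma_v(u))$, the orthogonality $g_{\ds}(\tilde V,\ds) \equiv 0$ gives $f_v(u,0)=0$, and $A$ is chosen so that $f_{vv}(u,0) = 2[g_{\ds}(A',\ds) - g_{\ds}(R^\sigma(\ds,\tilde V)\tilde V,\ds) + g_{\ds}(\tilde V',\tilde V')] \geq 0$ for all $u$, with strict inequality on a set of positive measure. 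For small $v>0$ the curves $\sigma_v$ are then piecewise smooth future-pointing causal curves from a point of $P$ close to $\sigma(0)$ to $q$ that fail to be lightlike pregeodesics, and Proposition~\ref{prop:causal1} produces the desired timelike curve. The main obstacle is precisely this last step: in the Lorentzian setting $L$ is defined on all of $TM$ and causality of variation curves follows from continuity, but here $L$ lives only on the extension $A^*$, so one must exploit the freedom in $A$ supplied by Lemma~\ref{AVvar} to control $g_{\ds}(A',\ds)$ pointwise and keep $\sigma_v$ inside $\hat A$.
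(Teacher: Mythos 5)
Your overall architecture matches the paper's: extract a $P$-Jacobi field at the focal point, build a $(P,q)$-variation with prescribed variation \emph{and} acceleration fields (exactly what Lemma \ref{AVvar} is for), apply the second variation formula \eqref{secondvargeo} of Lemma \ref{lemma:firstder}, and finish with Proposition \ref{prop:causal1}. Your route to positivity through the index form ($I^P_\sigma(V,V)=0$ for the broken Jacobi field, then $I^P_\sigma(V+\epsilon Z,V+\epsilon Z)>0$) is a legitimate variant of O'Neill's direct construction with the field $Y$ defined by $J=(r-u)Y$, and is fine modulo the usual care with the boundary term at $u=0$ and the curvature symmetries.

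The genuine gap is in the last step. You choose $A$ so that $f_{vv}(u,0)\geq 0$ for all $u$, with strict inequality only on a set of positive measure, and then assert that the curves $\sigma_v$ are causal for small $v>0$. This does not follow: at a parameter $u_0$ where $f(u_0,0)=f_v(u_0,0)=f_{vv}(u_0,0)=0$, the sign of $f(u_0,v)$ for small $v$ is governed by the third-order term in $v$ and can perfectly well be negative, so $\dot\sigma_v(u_0)$ may leave $\hat{A}$ and $\sigma_v$ fails to be causal there. What is needed---and what the paper's modification of the acceleration (the piecewise-linear function $\rho$ together with $A=(g_{\ds}(V',V)-\rho)N$) is designed to produce---is \emph{strict} positivity $f_{vv}(u,0)>0$ uniformly on the compact deformation interval; compactness then gives $f(u,v)>0$ for all $u$ and all small $v>0$, i.e.\ $\sigma_v$ is genuinely timelike on that piece, after which the concatenation with the untouched lightlike tail is handled by Proposition \ref{prop:causal1}. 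Note that this strict pointwise positivity is attainable precisely because your integrated condition holds: since $\sigma$ is a geodesic, $g_{\ds}(A',\ds)=\frac{d}{du}\,g_{\ds}(A,\ds)$, so $\int \tfrac12 f_{vv}(u,0)\,du$ is fixed (equal to $E_{\xx}''(0)=I^P_\sigma(\tilde V,\tilde V)>0$) independently of the interior values of $A$, and one may redistribute $g_{\ds}(A,\ds)$ so that $\tfrac12 f_{vv}(\cdot,0)$ becomes a positive constant. As written, however, the inference from ``$f_{vv}\geq 0$ with strict inequality somewhere'' to ``$\sigma_v$ causal'' is the one step that genuinely fails.
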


\begin{proof}
Taking into account Lemma \ref{AVvar}, the proof follows closely that of \cite[Proposition~48,~p.~296]{o1983}.  The goal of the proof therein is to use the focal point to deform a piece of $\sigma$ so that it becomes future-pointing timelike.  However, in \cite[Proposition~48,~p.~296]{o1983} there are two minor gaps, which we now point out.  Let $r > 0$ be the first focal point of $P$ along $\sigma$, and $J$ a nonzero $P$-Jacobi field along $\sigma$ that satisfies $J(r) = 0$.  Assume for now that $J(0) \in T_{\sigma(0)}P$ and $J(0)\not=0$.  Then part $(i)$ of Lemma \ref{Jacobi-P2} ensures that $J|_{[0,r]}$ is $-g_{\ds}$-spacelike: $J(0) \in T_{\sigma(0)}P$ and $J(r) = 0$ are $-g_{\ds}$-spacelike, while on $(0,r)$, $J$ is $g_{\ds}$-orthogonal but never tangent to the $-g_{\ds}$-lightlike curve $\sigma$, hence must be $-g_{\ds}$-spacelike.  Next, since $J(r) = 0$, there exists a smooth vector field $Y$ along $\sigma$ such that
$$
J(u)\ =\ (r-u)Y(u)
$$
(see \cite[p.~33]{o1983}), where $Y(0) \neq 0$ since we are assuming that $J(0) \neq 0$  (note that \cite[Proposition~48,~p.~296]{o1983} handles only the case $J(0) = 0$).  The same is true of $Y(r)$, since 
$$
J'(r)\ =\ \big[\!-Y(u)\ +\ (r-u)Y'(u)\big]\Big|_{u=r}\ =\ -Y(r),
$$
which implies that $Y(r) \neq 0$, otherwise $J'(r) = 0 = J(r)$ and $J$ would be identically zero (see \cite[Lemma~3.14]{JaSo14}).  From here on out the proof follows \cite[Proposition~48,~p.~296]{o1983} word for word, the only exception being that the acceleration must be modified in order to have the second derivative of the energy of the variation positive in a compact neighborhood. This is done as follows.  Using the notation in the proof of \cite[Proposition~48,~p.~296]{o1983} adapted to our case, let $t^*=r+\delta$ and
\[\vep<-\ip{V''-R^{\sigma}(\ds,V)\ds}{V}{\ds}\,\]
in $[t^*/4,3t^*/4]$, and define the function 
\[
\rho(t)\ =\ \begin{cases}
\vep t,& 0\leq t\leq t^*/4, \\
\vep(-t+t^*/2), & t^*/4\leq t\leq 3t^*/4,\\
-\vep(t^*-t), & 3t^*/4\leq t\leq t^*.
\end{cases}
\]  Then a suitable choice for the acceleration is $A(u) := (\ip{V'(u)}{V(u)}{\ds(u)}-\rho(u))N(u)$ along $\sigma$. 
\end{proof}

 Propositions \ref{prop:notnormal}, \ref{prop:causal1}, and \ref{prop:causal2}, which are the mirrors of \cite[Lemma~50,~p.~298]{o1983}, \cite[Proposition~46,~p.~294]{o1983}, and \cite[Proposition~48,~p.~298]{o1983}, respectively, all converge to the following important theorem, which is identical to \cite[Theorem~51,~p.~298]{o1983} (see also \cite[Corollary~5,~p.~404]{o1983}), and plays a crucial role in Penrose's singularity theorem.

\begin{thm}
\label{thm:fhfocal}
Let $P$ be a spacelike submanifold of a Finsler spacetime $(M,L)$.  If $\sigma\colon[0,b] \lra M$ is a piecewise smooth future-pointing causal curve from $\sigma(0) \in P$ to $\sigma(b) = q \in \cf{P} \setminus \chrf{P}$, then $\sigma$ must be a future-pointing lightlike geodesic that is orthogonal to $P$ at $\sigma(0)$ and has no focal points of $P$ strictly before $q$. 
\end{thm}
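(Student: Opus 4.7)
The plan is to prove the contrapositive: assume $\sigma$ fails at least one of the three asserted conclusions, and produce a piecewise smooth future-pointing timelike curve from $P$ to $q$; this places $q$ in $\chrf{P}$, contradicting $q\in \cf{P}\setminus \chrf{P}$. The whole argument is a case split that funnels into the three propositions already proved (\ref{prop:notnormal}, \ref{prop:causal1}, \ref{prop:causal2}), together with the observation that any chronological curve issuing from $\sigma(0)\in P$ places its endpoint in $\chrf{\sigma(0)}\subseteq \chrf{P}$ (and that $\chrf{P}=\chrf{\cf{P}}$ by Corollary \ref{cor:IJ}).

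The case split runs as follows. \textbf{Case 1:} If $\sigma$ is not a smooth lightlike pregeodesic, then Proposition \ref{prop:causal1} immediately yields a piecewise smooth future-pointing timelike curve from $\sigma(0)$ to $q$, so $q\in \chrf{\sigma(0)} \subseteq \chrf{P}$, a contradiction. In the remaining cases we may therefore assume $\sigma$ is a smooth lightlike pregeodesic, and after an affine reparametrization we may take $\sigma$ itself to be a lightlike geodesic (reparametrization does not affect the notions of orthogonality to $P$ or of $P$-focal points, since the Jacobi equation and the normal second fundamental form $\tilde S^P_{\dot\sigma(0)}$ depend only on the pregeodesic structure up to the positive scaling determined by the initial velocity). \textbf{Case 2:} If the geodesic $\sigma$ is not $g_{\dot\sigma(0)}$-orthogonal to $P$ at $\sigma(0)$, Proposition \ref{prop:notnormal} supplies a smooth future-pointing timelike curve from $P$ to $q$, giving $q\in \chrf{P}$, again a contradiction. \textbf{Case 3:} If $\sigma$ is a future-pointing lightlike geodesic orthogonal to $P$ but admits a focal point of $P$ strictly before $q$, Proposition \ref{prop:causal2} produces a piecewise smooth future-pointing timelike curve from $\sigma(0)$ to $q$, whence again $q\in \chrf{P}$, contradicting the hypothesis.

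Since each of the three failure modes rules itself out, $\sigma$ must be a future-pointing lightlike geodesic that is orthogonal to $P$ at $\sigma(0)$ and has no focal point of $P$ strictly before $q$, completing the proof.

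I do not expect any serious obstacle here: all the substantive analytic and variational work is packed into the three preceding propositions and the structural fact $\chrf{\sigma(0)} \subseteq \chrf{P}$. The only mild subtlety to articulate carefully is the reduction from ``pregeodesic'' to ``geodesic'' between Cases 1 and 2--3, ensuring that orthogonality to $P$ and the existence of $P$-focal points are invariant under the allowed reparametrizations, so that Propositions \ref{prop:notnormal} and \ref{prop:causal2} (which are stated for geodesics) apply without loss of generality.
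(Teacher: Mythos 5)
Your proof is correct and follows essentially the same route the paper intends: the theorem is stated without a separate proof precisely because it is assembled, exactly as you do, by running O'Neill's case split through Propositions \ref{prop:causal1}, \ref{prop:notnormal}, and \ref{prop:causal2}, with the structural observation $\chrf{\sigma(0)}\subseteq \chrf{P}$ closing each case. One small imprecision worth fixing: a lightlike \emph{pregeodesic} is generically brought to geodesic parametrization by a \emph{non-affine} reparametrization (if $D_\sigma^{\dot\sigma}\dot\sigma=f\dot\sigma$ with $f\not\equiv 0$, the reparametrization $h$ solves $h''+(h')^2 f\circ h=0$, which is only affine when $f\equiv 0$). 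This does not harm the argument, but you should replace ``affine reparametrization'' by ``orientation-preserving reparametrization''; the invariances you then need to cite are that $g_{\dot\sigma(0)}$ is degree-zero positive homogeneous in the reference vector (so $g_{\dot{\tilde\sigma}(0)}(\dot{\tilde\sigma}(0),v)=h'(0)\,g_{\dot\sigma(0)}(\dot\sigma(0),v)$, preserving orthogonality to $T_{\sigma(0)}P$), and that $P$-focal instants simply transport along the parameter change, so ``focal point strictly before $q$'' is preserved.
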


\section{Penrose's singularity theorem in a Finsler spacetime}
\label{section:Penrose}
 The following fundamental results hold in the Finslerian setting exactly as they do in the Lorentzian setting, and therefore we state them without proof: (1) an achronal set $B$ is a closed topological hypersurface if and only if $B$ has no edge points (see \cite[Corollary~26,~p.~414]{o1983}); (2) the boundary of a future set, if nonempty, is a closed achronal topological hypersurface (see \cite[Corollary~27,~p.~415]{o1983}).  (Note that in proving (2) in the Lorentzian setting, one typically chooses local coordinates $(x^i)$ in which $\partial/\partial x^0$ is future-pointing timelike, this being guaranteed by virtue of a time orientation on the Lorentz manifold.  In the case of a Finsler spacetime, this is achieved by choosing a smooth timelike vector field $\tau$ for $(M,L)$, which is always possible by Proposition \ref{lemma:Lorentz}.) Next, we define Cauchy hypersurfaces as in the Lorentzian case.

\begin{defi}
\label{defi:cauchy}
Let $(M,L)$ be a Finsler spacetime.  A subset $B \subset M$ is a \emph{Cauchy hypersurface} if $B$ is met exactly once by every inextendible, piecewise smooth, timelike curve.
\end{defi}

 It follows exactly as in the Lorentzian case that any Cauchy hypersurface in a Finsler spacetime $(M,L)$ is a closed topological hypersurface (see \cite[Lemmas~29,30,~p.~415-6]{o1983}; note that Proposition~\ref{prop:causal1} is needed here).  Furthermore, by choosing a timelike vector field $\tau$ we can, just as in the Lorentzian setting, construct a continuous retraction $r\colon M \lra S$ by defining $r(p)$ to be the unique intersection point in $S$ of the (timelike) integral curve of $\tau$ through $p$ (the proof is identical to the Lorentzian case; see \cite[Proposition~31,~p.~417]{o1983}).  Before proceeding to Penrose's proof, we list a few further properties of the \emph{future horizons} $\fh{P} = \cf{P} \setminus \chrf{P}$.  As expected, they have direct Lorentzian analogues.

\begin{lemma}
\label{lemma:fh1}
Let $(M,L)$ be a Finsler spacetime and $P \subset M$ a nonempty achronal subset.  Then the following are true:
\begin{enumerate}
\item[{\rm (a)}] $\fh{P}$ is achronal and $P \subset \fh{P}$ (hence the latter is nonempty),
\item[{\rm (b)}] If $P$ is compact and the diamonds $\cf{p} \cap \cp{q}$ are closed for all $p,q \in M$, then $\fh{P}$, is a closed topological hypersurface.
\end{enumerate}
\end{lemma}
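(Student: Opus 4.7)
The plan is to handle (a) directly from Corollary \ref{cor:IJ} and achronality, and to reduce (b) to identifying $\fh{P}$ with the topological boundary $\partial \chrf{P}$ so that fact (2) from the start of Section \ref{section:Penrose} applies. For (a), achronality of $P$ forces $P \cap \chrf{P} = \emptyset$, while $P \subset \cf{P}$ is immediate, giving $P \subset \fh{P}$, which is therefore nonempty. If $p,q \in \fh{P}$ satisfied $p \ll q$, then $q \in \chrf{p} \subset \chrf{\cf{P}} = \chrf{P}$ by Corollary \ref{cor:IJ}, a contradiction; hence $\fh{P}$ is achronal.

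For (b), observe first that Corollary \ref{cor:IJ} gives $\chrf{\chrf{P}} = \chrf{P}$, so $\chrf{P}$ is an open future set, and its boundary $\partial \chrf{P} = \overline{\chrf{P}} \setminus \chrf{P}$ is automatically a closed achronal topological hypersurface by fact (2). It thus suffices to prove $\fh{P} = \partial \chrf{P}$. The inclusion $\fh{P} \subset \partial \chrf{P}$ is easy: given $q \in \fh{P}$ witnessed by $p \in P$ with $p \leq q$, approach $q$ from its chronological future along an integral curve of the timelike vector field $\tau$ furnished by Proposition \ref{lemma:Lorentz}. This produces $q_n \to q$ with $q \ll q_n$, so $p \ll q_n$ and $q_n \in \chrf{P}$, giving $q \in \overline{\chrf{P}} \setminus \chrf{P} = \partial \chrf{P}$.

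The nontrivial inclusion $\partial \chrf{P} \subset \fh{P}$ reduces to $\overline{\chrf{P}} \subset \cf{P}$. Given $q_n \to q$ with $q_n \in \chrf{P}$, pick $p_n \in P$ with $p_n \ll q_n$; by compactness of $P$ extract a subsequential limit $p_n \to p \in P$. For any $r \ll p$ and any $s \gg q$, both obtained via $\tau$, the openness of $\chrf{r}$ and $\chrp{s}$ yields $r \ll p_n \ll q_n \ll s$ for all large $n$, so $p_n$ and $q_n$ eventually both lie in the closed diamond $\cf{r} \cap \cp{s}$. A limit-curve argument inside this closed region should then produce a future-pointing causal curve from $p$ to $q$, certifying $q \in \cf{P}$. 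This is the Finslerian analogue of the standard fact that $\cf{K}$ is closed for compact $K$ in a globally hyperbolic Lorentzian spacetime.

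The step I expect to be the main obstacle is precisely the limit-curve argument in the preceding paragraph, since the excerpt does not explicitly develop limit-curve theorems in the Finsler category. Concretely, one would need to verify that a sequence of future-pointing causal curves with endpoints converging inside a closed diamond admits a subsequence converging uniformly on compacta to a causal curve with the limiting endpoints --- essentially an Arzel\`a--Ascoli argument using an auxiliary Riemannian background metric, combined with Proposition \ref{prop:causal1} to propagate causality under limits. Given the basic causal machinery set up in Section \ref{section:causality}, this should go through, but it is the point at which genuine Finsler-specific care is required.
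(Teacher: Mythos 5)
Your part (a) and your reduction of part (b) to the identity $\fh{P}=\partial\chrf{P}$ (boundary of the future set $\chrf{P}$, hence a closed achronal topological hypersurface) coincide with the paper's route, which simply cites \cite[p.~435]{o1983} and \cite[Cor.~27, p.~415]{o1983}. The genuine gap is the step you yourself flag: the inclusion $\overline{\chrf{P}}\subset\cf{P}$ is left resting on an unproved Finslerian limit-curve theorem. This cannot be deferred here: no limit-curve machinery is developed anywhere in the paper, and the authors explicitly remark in their proof of this lemma that ``the existence of limit curves is not required for this proof, or in Theorem~\ref{thm:Penrose}.'' Moreover, the hypotheses of part (b) give you only compactness of $P$ and closed diamonds --- no strong causality --- so even granting an Arzel\`a--Ascoli statement for causal curves in a compact diamond, you would still need a non-imprisonment argument to ensure the limit curve actually connects $p$ to $q$; and Proposition \ref{prop:causal1} does not ``propagate causality under limits'' (it approximates causal curves by timelike ones, the opposite of what you need).

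The step closes softly, with no limit curves, from exactly the configuration you already set up. First, closed diamonds imply that $\cf{x}$ and $\cp{x}$ are closed for every \emph{point} $x$: if $y_n\to y$ with $y_n\in\cf{x}$, fix $s\in\chrf{y}$ (flow along the timelike field $\tau$ of Proposition \ref{lemma:Lorentz}); then $y_n\in\chrp{s}$ for large $n$, so $y_n\in\cf{x}\cap\cp{s}$, which is closed, whence $y\in\cf{x}$. Now take $q_n\to q$ with $q_n\in\chrf{p_n}$, $p_n\to p\in P$, as in your argument. For every $r\ll p$ one has $p_n\in\chrf{r}$ for large $n$, hence $r\ll p_n\ll q_n$ gives $q_n\in\chrf{r}\subset\cf{r}$, and closedness of $\cf{r}$ gives $q\in\cf{r}$, i.e.\ $r\in\cp{q}$. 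Thus $\chrp{p}\subset\cp{q}$; since $\cp{q}$ is closed and $p\in\overline{\chrp{p}}$ (flow backwards along $\tau$), it follows that $p\in\cp{q}$, i.e.\ $q\in\cf{p}\subset\cf{P}$. This is precisely the content of \cite[Proposition~4.3]{GG} that the paper invokes, and it is the replacement for your limit-curve paragraph.
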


\begin{proof}
For (a), see \cite[p.~435]{o1983}; for (b), observe that $\cf{P}$ must be closed (cf. \cite[Proposition~4.3]{GG}) and apply \cite[Cor. 27, p.~415]{o1983} (note that the existence of limit curves is not required for this proof, or in Theorem~\ref{thm:Penrose} below).
\end{proof}

Fundamental to Penrose's proof is the concept of \emph{trapped surface}.  As in \cite[p.~435]{o1983}, we define trapped surfaces in terms of their mean curvature vector fields.  

\begin{defi}
\label{defi:ts}
Let $(M,L)$ be a Finsler spacetime and $P \subset M$ a spacelike codimension 2 submanifold.  Then $P$ is a \emph{trapped surface} if for every future-pointing lightlike vector $z$ orthogonal to $P$ (recall Proposition \ref{codim2}), the mean curvature vector field $H_z^P$ satisfies $g_z(H_z^P,z)>0$.
\end{defi}
\begin{rem}\label{rem:Chern}
\label{connections}
Observe that all the elements in Penrose's theorem, including trapped surfaces, are independent of the connection, provided that a certain family of affine  connections $\tilde{\nabla}^v$ (as in \cite[Def. 7.1.1]{Sh01}) is sufficiently compatible, in the sense that
\begin{enumerate}[(i)]
\item it gives the same geodesics (which do not depend on the connection as they are critical points of the energy functional),
\item it gives the same flag curvature,
\item and if we define the tensor $Q_v(X,Y)=\tilde{\nabla}^v_XY-\nabla^v_XY$,  where $\nabla^v$ is the Chern connection, for $v\in A\subset TM$ and $X,Y$ are vector fields in an open subset of $\pi(v)$, then $g_z(Q_z(x,y),z)=0$ for every lightlike vector $z$ and every $x,y\in T_{\pi(z)}M$.
\end{enumerate}
The last condition easily implies that $g_z(z,H^P_z)=g_z(z,\tilde{H}^P_z)$ for every lightlike vector $z$, where $\tilde{H}^P_z$ is the mean curvature vector of a non-degenerate submanifold $P$ computed with the connection $\tilde{\nabla}^v$. Moreover, as the scalar Ricci curvature is a mean of flag curvatures, the second condition implies also that the connections generate the same scalar Ricci curvature.   It is known that the classical connections used in Finsler geometry (Berwald, Cartan, Chern, and Hashiguchi) give the same value for the flag curvature  (\cite[Section 3.9]{bao2000}) and the same geodesics. Up to the third condition, it is easy to check for a Berwald connection, which can be easily interpreted as a family of affine connections (see \cite[Chapter 7]{Sh01}). Indeed, in this case the tensor $Q_v$ is the tensor $L_v$ defined in \cite[page 100]{Sh01}, since in this case, using \cite[Eq. (6.26)]{Sh01} and the symmetry of $L_v$,
\[g_v(L_v(x,y),v)=L_v(x,y,v)=0.\]
Moreover, using the relations between the connections given in \cite[page 39]{bao2000}, we can easily associate families of affine connections to Cartan and Hashiguchi, which are the same as for Chern and Berwald, respectively. It follows, then, that the third property also holds for Cartan and Hashiguchi connections.
\end{rem}
By Lemma~\ref{lemma:spacesub}, each $g_{H_z^P}|_{T_pP \times T_pP}$ is nondegenerate, thereby allowing the splitting
$$
T_pM\ =\ T_pP\ \oplus\ (T_pP)^{\perp}_{H_z^P}.
$$
The key step in Penrose's proof is that under ``reasonable" geometric conditions, a trapped surface $P$ will necessarily have a \emph{compact} future horizon $\fh{P}$.  The proof of this in a Finsler spacetime follows \cite[Proposition~60,~p.~436]{o1983}, but requires some modification.

\begin{prop}
\label{prop:ftrapped}
Let $(M,L)$ be a Finsler spacetime, $P \subset M$ a compact, achronal trapped surface, and suppose that
\begin{enumerate}
\item[{\rm(a)}] ${\rm Ric}(v) \geq 0$~\text{for all future-pointing lightlike vectors $v \in \hat{A}\setminus A$},
\item[{\rm(b)}] $M$ is future lightlike complete.
\end{enumerate}
Then the future horizon $\fh{P} = \cf{P} \setminus \chrf{P}$ is compact.
\end{prop}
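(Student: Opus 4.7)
The plan is to follow the template of \cite[Proposition~60,~p.~436]{o1983}, adapting each step to the Finslerian setting. By Theorem~\ref{thm:fhfocal}, every $q \in E^+(P)$ is the endpoint of a future-pointing lightlike geodesic emanating from some $p \in P$, orthogonal to $P$ at $p$ and with no $P$-focal points strictly before $q$. The strategy is therefore to bound such geodesic segments uniformly in affine parameter via a focal estimate, and then exhibit $E^+(P)$ as a closed subset of the compact image of a continuous map out of a compact parameter space.

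First I would construct a compact ``unit lightlike normal bundle'' of $P$. By Proposition~\ref{codim2}, at each $p \in P$ there are exactly two future-pointing lightlike rays orthogonal to $T_pP$. Fix the global timelike vector field $\tau$ from Proposition~\ref{lemma:Lorentz}. The reverse fundamental inequality (Proposition~\ref{Lorentznorm}(ii)) gives $g_z(z,\tau_p) > 0$ for every future-pointing lightlike $z$ at $p$, since $F(z)=0$ and $\tau_p$ is not parallel to $z$. As this quantity is positive homogeneous of degree one in $z$, it selects a unique representative on each such ray. Let $N^+_1 P$ denote the set of future-pointing lightlike $z \in TM|_P$ that are $g_z$-orthogonal to $T_{\pi(z)}P$ and satisfy $g_z(z,\tau_{\pi(z)}) = 1$. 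Then $N^+_1 P$ is a (smooth) double cover of the compact set $P$, hence compact.

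Next I would use the trapped condition to produce a uniform focal bound. The map $z \mapsto g_z(H_z^P, z)$ is continuous and strictly positive on $N^+_1 P$ by Definition~\ref{defi:ts}, so by compactness $k := \inf_{z \in N^+_1 P} g_z(H_z^P, z) > 0$. Hypothesis (a) and Proposition~\ref{prop:Ric} then produce a $P$-focal point on every lightlike geodesic $\sigma_z$ with initial velocity $z \in N^+_1 P$ at affine parameter at most $1/k$, provided $\sigma_z$ is defined on $[0,1/k]$; hypothesis (b) guarantees this. Combining this with Theorem~\ref{thm:fhfocal} and the normalization, every $q \in E^+(P)$ equals $\sigma_z(t)$ for some $(z,t) \in N^+_1 P \times [0, 1/k]$.

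Finally, the map $\Phi\colon N^+_1 P \times [0, 1/k] \lra M$, $(z,t) \mapsto \sigma_z(t)$, is continuous by smooth dependence of the Finslerian geodesic flow on its initial data, which is available on lightlike vectors thanks to Definition~\ref{def:Finsler}(iv). The image of $\Phi$ is therefore compact and it contains $E^+(P)$. By Lemma~\ref{lemma:fh1}(b), together with the standing assumption that the diamonds $\cf{p} \cap \cp{q}$ are closed, the set $E^+(P)$ is itself closed, hence compact as a closed subset of a compact set. I expect the main point requiring care to be the verification that $N^+_1 P \to P$ is a genuine continuous compact double cover; this follows by letting the two-tangency construction in the proof of Proposition~\ref{codim2} vary smoothly in $p$, which is legitimate because strict convexity of $\mathcal{C}_p \cap W$ for a spacelike hyperplane $W$ is preserved under small perturbations of the base point.
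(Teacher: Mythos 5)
Your overall strategy coincides with the paper's: build a compact parameter space of unit future-pointing lightlike normals to $P$ (you normalize by $g_z(z,\tau)=1$ with the global timelike field $\tau$; the paper intersects $TP^\perp\cap\mathcal C$ with the unit sphere bundle of an auxiliary Riemannian metric and checks transversality so that this is a genuine compact submanifold --- the two normalizations are interchangeable), extract a uniform focal bound $1/k$ from compactness and the trapped condition via Proposition~\ref{prop:Ric}, and use Theorem~\ref{thm:fhfocal} to trap $\fh{P}$ inside the compact image $\Phi\big(N^+_1P\times[0,1/k]\big)$.

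The genuine gap is in your last step. You conclude that $\fh{P}$ is closed by invoking Lemma~\ref{lemma:fh1}(b) ``together with the standing assumption that the diamonds $\cf{p}\cap\cp{q}$ are closed.'' There is no such standing assumption: Proposition~\ref{prop:ftrapped} assumes only the Ricci condition and future lightlike completeness, and the closed-diamond (or global hyperbolicity) hypothesis enters the paper only in Lemma~\ref{lemma:fh1}(b) and Theorem~\ref{thm:Penrose}. Without it, $\cf{P}$ need not be closed, so $\fh{P}=\cf{P}\setminus\chrf{P}$ is not a priori closed in $M$, and ``closed subset of a compact set'' does not apply. The fix is the one the paper uses, and it costs nothing extra: given a sequence $\{\tilde q_n\}\subset \fh{P}$, pass to a subsequence converging to some $\tilde q$ in the compact set $\Phi\big(N^+_1P\times[0,1/k]\big)$. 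Every point of that image lies on a future-pointing lightlike geodesic issuing from $P$, hence $\tilde q\in\cf{P}$ automatically; and $\tilde q\notin\chrf{P}$ because $\chrf{P}$ is open and would otherwise contain some $\tilde q_{n_j}\in\fh{P}$, a contradiction. Thus $\tilde q\in\fh{P}$ and compactness follows with no causality hypothesis. With that replacement (and with the double-cover/compactness claim for $N^+_1P$ fleshed out, e.g.\ by the transversality argument showing $TP^\perp\cap\mathcal C$ is a submanifold), your proof matches the paper's.
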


\begin{proof}
Recall that the subset of orthogonal vectors to $P$, denoted by $TP^\perp$, is a submanifold of $TM$, (see \cite[Lemma 3.3]{JaSo14} and recall that by Lemma \ref{codim2}, $P_0=P$ in the present context), and that we have assumed for convenience that $L$ is defined in a conic open subset $A^*\supset \hat A$.  In fact, for the purposes of this proof, we can assume that $L$ is defined in a conic open subset which contains all lightlike vectors and that it is smooth therein.  Its intersection with the lightcone $\mathcal C = \cup_{p \in M} \,\mathcal{C}_p$ is a submanifold because $TP^{\perp}$ and $\mathcal{C}$ intersect transversely. This is because the tangent space to $\mathcal C$ at $v$ is the sum of the subspace of the vertical space of $TM$, 
\[T_{v}{\mathcal C}_{\pi(v)}\ =\ \{u\in T_{\pi(v)}M:  g_v(v,u)=0\},\]
 plus a  subspace $\tilde{U}$ transversal to the fibers of $TM$ of dimension $n$, while the tangent space to $TP^\perp$ contains the vertical vectors 
 \[T_{\pi(v)}P^{\perp g_v}\ :=\ \{u\in T_{\pi(v)}M: \text{$g_v(u,w)=0$ for every $w\in T_{\pi(v)}P$}\}\]
 (see the proof of \cite[Lemma 3.3]{JaSo14}). Now observe that  $T_{\pi(v)}P^{\perp g_v}$ cannot be contained in $T_{v}{\mathcal C}_{\pi(v)}$ because it has Lorentzian signature; hence $T_{\pi(v)}P^{\perp g_v}+T_{v}{\mathcal C}_{\pi(v)}=T_{\pi(v)}M$. It follows that $T_v{\mathcal C}+T_vTP^\perp\supseteq T_{\pi(v)}M+\tilde{U}=T_vTM$ and hence $TP^\perp$ and  $\mathcal C$ are transverse and their intersection a submanifold. Now, given $z\in TP^\perp\cap {\mathcal C}$, denote by $\sigma_{z}$ the future-pointing lightlike geodesic starting at $\sigma_{z}(0) =\pi(z) \in P$ with tangent $\ds_z(0) = z$.   Because $P$ is a trapped surface, we have that 
$$
k_{z}\ :=\ \ip{H^P_z}{z}{z}\ >\ 0
$$
(recall part $(ii)$ of Proposition  \ref{Lorentznorm}).  Hence by Proposition~\ref{prop:Ric} each $\sigma_{z}$ has a focal point somewhere along the interval $[0,1/k_{z}]$ (since $M$ is future lightlike complete, each $\sigma_{z}|_{[0,1/k_{z}]}$ is defined).  To find a common interval for all $\sigma_{z}$, let $g_R$ be any Riemannian metric on $M$, $SM$ the unit tangent bundle for this metric, and $U:=TP^\perp\cap {\mathcal C}\cap SM$, which is a submanifold of $TM$ because $SM$ and $TP^\perp\cap {\mathcal C}$ are transversal; furthermore, $U$ is compact because $P$ is compact and for any $p\in P$ there are only two vectors $v_1$, $v_2$ in $U$ that belong to $T_pM$ (see Proposition \ref{codim2}). Define a map $k\colon U \lra \RR$ by
$$
z\ \mapsto\ \ip{H^P_z}{z}{z}\ =\ k_{z}.
$$
This map is continuous (it's the restriction of the smooth map $k\colon TP^\perp \lra \RR$).   By compactness of $U$, there is a smallest value for $k|_{U}$, which we denote $1/b$.  We thus conclude that all the future-pointing lightlike geodesics $\sigma_{z}$ as defined above with $z\in U$ have a focal point somewhere in the interval $[0,b]$.
\vskip 12pt
 This now ensures that $E^+(P)$ must be compact, as follows.  Consider an arbitrary future-pointing lightlike geodesic $\gamma$ starting at $\gamma(0) = p \in P$ and orthogonal to $P$.  By Theorem~\ref{thm:fhfocal}, the set $E^+(P)$ is generated by future-pointing lightlike geodesics that are orthogonal to $P$ and having no focal points.  In the present context, this means that if $q \in E^+(P)$, then $q = \sigma_{z}(s_*)$, for some $\sigma_{z}$  starting at $P$ and $s_* \in [0,b]$.  Now define the set $K = \{ sz : z \in U ~\text{and}~0 \leq s \leq b\}$ and note that if $q \in \fh{P}$, then $q = \sigma_{z}(s_*) = \sigma_{s_*z}(1)$, so that $q \in \text{exp}(K)$, hence $\fh{P} \subset \text{exp}(K)$.  Moreover,  $\text{exp}(K)$ is compact because $K$ is compact (observe that $\text{exp}$ can be extended continuously by homogeneity to the zero section).  To show that $\fh{P}$ is compact, let $\{\tilde{q}_n\} \subset E^+(P)$ be any sequence.  Viewed as a sequence in the compact set $\text{exp}(K)$, it has a convergent subsequence $\{\tilde{q}_{n_j}\}$, with some limit point $\tilde{q} \in \text{exp}(K)$.  Then $\tilde{q} = \sigma_{sz}(1) = \sigma_{z}(s)$ for some $sz\in K$, which implies that $\tilde{q} \in \cf{P}$.  Now, if $\tilde{q} \in \chrf{P}$, then because $\chrf{P}$ is an open set we must have some $\tilde{q}_{n_j} \in \chrf{P}$, which cannot happen because $\{\tilde{q}_n\} \subset E^+(P)$.  Hence $\tilde{q} \in J^+(P) \setminus \chrf{P} = \fh{P}$, and the proof is complete.
\end{proof}

We are finally in a position to prove Penrose's singularity theorem for a Finsler spacetime.  As mentioned in the Introduction, we have not attempted in this paper to demonstrate the equivalence, in a Finsler spacetime, of global hyperbolicity and the existence of a Cauchy hypersurface.  In Penrose's proof, the key consequence of global hyperbolicity is that the sets $J^{\pm}(K)$ are closed whenever $K \subset M$ is compact; this follows if one postulates that the diamonds $\cf{p} \cap \cp{q}$ are closed for all $p,q \in M$, which is of course part of the definition of global hyperbolicity (for a complete treatment of the causal hierarchy of Lorentzian spacetimes, see \cite{ming08}; see also \cite{fs12} for an analysis via the cone structure).  In our proof below we have therefore assumed that our Finsler spacetimes are globally hyperbolic, in \emph{addition} to having Cauchy hypersurfaces (in fact it would have sufficed to assume, in place of global hyperbolicity, the weaker condition that the diamonds $\cf{p} \cap \cp{q}$ be closed for all $p,q \in M$).  Let us also observe that the Ricci curvature condition ((c) in Theorem \ref{thm:Penrose} below) follows from a condition of non-negativity along lightlike directions of the stress-energy tensor  appearing in the Finsler gravity field equations in \cite[Eq. (54)]{pfeifer}.  Whether or not the non-negativity of the stress-energy tensor can be interpreted as a null energy condition in the Finsler realm is something deserving of further study in future work.

\begin{thm}
\label{thm:Penrose}
Let $(M,L)$ be a globally hyperbolic Finsler spacetime and suppose that the following hold:
\begin{enumerate}
\item[{\rm (a)}] $(M,L)$ contains a noncompact Cauchy hypersurface $S$,
\item[{\rm (b)}] $(M,L)$ contains a compact, achronal trapped surface $P$,
\item[{\rm (c)}] $\text{{\rm Ric}}(v) \geq 0$ for all future-pointing lightlike vectors $v \in \hat{A} \setminus A$.
\end{enumerate}
Then $(M,L)$ is future lightlike incomplete.
\end{thm}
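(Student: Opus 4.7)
The proof will proceed by contradiction: assume $(M,L)$ is future lightlike complete. I would first invoke Proposition \ref{prop:ftrapped}, whose three hypotheses are met by (b), (c), and the standing completeness assumption, to conclude that the future horizon $\fh{P} = \cf{P}\setminus\chrf{P}$ is compact. Next I would observe that $\fh{P}$ is a closed achronal topological hypersurface: achronality and the nonemptiness $P\subset\fh{P}$ come from Lemma \ref{lemma:fh1}(a), while the closed-hypersurface property follows from Lemma \ref{lemma:fh1}(b), whose diamond-closedness hypothesis is supplied by global hyperbolicity.

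The heart of the argument is then a dimension-and-compactness clash on the Cauchy hypersurface. Using Proposition \ref{lemma:Lorentz}, I would pick a smooth future-pointing timelike vector field $\tau\in\mathfrak{X}(M)$, and restrict to $\fh{P}$ the continuous retraction $r\colon M\lra S$ recorded in the paragraph preceding Lemma \ref{lemma:fh1}, which sends each $p\in M$ to the unique intersection with $S$ of the integral curve of $\tau$ through $p$. The restriction $r|_{\fh{P}}$ is injective: if two distinct points of $\fh{P}$ shared the same image, they would lie on a common integral curve of $\tau$, which is timelike, so one would chronologically precede the other, contradicting the achronality of $\fh{P}$.

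To finish, I would compare dimensions. Since $\fh{P}$ and $S$ are both $(n-1)$-dimensional closed topological submanifolds of $M$, invariance of domain makes $r(\fh{P})$ open in $S$; since $\fh{P}$ is compact, $r(\fh{P})$ is also compact and hence closed in $S$. Connectedness of $S$ (which, as in the Lorentzian case, follows from connectedness of $M$ via the decomposition $M = D^{-}(S)\cup D^{+}(S)$), together with the nonemptiness $P\subset\fh{P}$, then forces $r(\fh{P}) = S$, so $S$ must be compact, contradicting hypothesis (a).

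The main obstacle has already been packaged into Proposition \ref{prop:ftrapped}; once compactness of $\fh{P}$ is in hand, the remainder is a purely topological argument that ports directly from the Lorentzian proof, all the necessary ingredients (achronal sets as topological hypersurfaces, a global timelike vector field, the continuous retraction onto $S$, and invariance of domain) having been recorded earlier in the Finsler-spacetime setting.
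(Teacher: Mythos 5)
Your proposal is correct and follows essentially the same route as the paper, which simply defers to O'Neill's Theorem~61 after citing Lemma~\ref{lemma:fh1}, Proposition~\ref{prop:ftrapped}, and Proposition~\ref{lemma:Lorentz}; you have accurately reconstructed that argument (compactness of $\fh{P}$, achronality, the retraction onto $S$ via $\tau$, and the invariance-of-domain/connectedness clash with the noncompactness of $S$).
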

\begin{proof}
Since by assumption $(M,L)$ is globally hyperbolic, it follows easily that all $J^{\pm}(p)$'s are closed.
Then taking into account Lemma~\ref{lemma:fh1}, Proposition~\ref{prop:ftrapped}, and Proposition~\ref{lemma:Lorentz}, the proof goes through as in \cite[Theorem~61,~p.~436]{o1983}.
\end{proof}

\section{Concluding Remarks}
In this paper we have shown that Penrose's singularity theorem \cite{pen65} also holds on Finsler spacetimes.  Our definition of Finsler spacetime (Definition \ref{def:Finsler}) is general, being defined only along causal directions, and with a high degree of non-smoothness allowed along timelike directions; the latter fact in particular implies that our result holds for static Finsler spacetimes.  From this definition, we systematically established all the relevant causal concepts and properties required in Penrose's theorem, from the designations of spacelike, timelike, and lightlike vectors and submanifolds to Cauchy hypersurfaces and trapped surfaces.  Though much of this was analogous to the Lorentzian setting, nevertheless there are subtleties that are unique to the Finslerian setting, which we have been careful to point out (one such example is the notion of lightlike directions orthogonal to a spacelike surface, Proposition \ref{codim2}, which is a nontrivial fact to establish on a Finsler spacetime).  Next, we established the relevant variational results, from Jacobi fields to focal points, culminating in Theorem \ref{thm:fhfocal}, which parallels its Lorentzian version in \cite[Theorem~51,~p.~298]{o1983}.  Here, too, however, there are difficulties with respect to the curvature and exponential map that are unique to the Finslerian setting (see, e.g., Proposition \ref{prop:causal1} and Lemma \ref{AVvar}).  With these difficulties out of the way, we then proceeded to prove Penrose's singularity theorem on our Finsler spacetime analogously to the variational proof to be found in \cite[Theorem~61,~p.~436]{o1983}.  In doing so, we inserted the condition of global hyperbolicity into the assumptions of our theorem, since in this paper we have not established the equivalence, in a Finsler spacetime, of global hyperbolicity and the existence of a Cauchy hypersurface.  Finally, though we have used the Chern connection, our definitions of geodesics, Ricci curvature, and trapped surfaces do not depend on this particular choice of connection (see Remark \ref{rem:Chern}).

\section*{Acknowledgements}
The authors thank Miguel S\'anchez for helpful discussions, as well as the anonymous referees for valuable comments and suggestions.

\quad\vspace{0.5cm}\,\newline
\includegraphics[height=0.07\textheight]{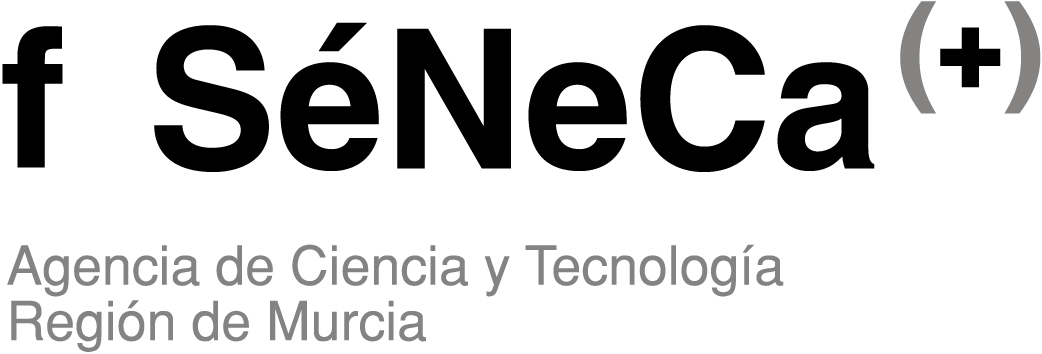}
\end{document}